\DeclareMathOperator{\rank}{rank}
\DeclareMathOperator{\myvec}{vec}
\newtheorem{theorem}{Theorem}[section]
\newtheorem{lemma}[theorem]{Lemma}
\newtheorem{proposition}[theorem]{Proposition}
\theoremstyle{definition}
\theoremstyle{definition}
\theoremstyle{definition}
\theoremstyle{definition}
\newtheorem{rem}{Remark}[section]
\title{Constructions of maximum few-distance sets in Euclidean spaces}
\author{Ferenc Sz\"oll\H{o}si and Patric R.J. \"Osterg\aa rd}
\thanks{\today, Preprint. This research was supported in part by the Academy of Finland, Grant \#289002}
\address{F. Sz. and P.R.J. \"O: Department of Communications and Networking, Aalto University School of Electrical Engineering, P.O. Box 15400, 00076 Aalto, Finland}
\email{szoferi@gmail.com, patric.ostergard@aalto.fi}
\begin{document}
\begin{abstract}
A finite set of distinct vectors $\mathcal{X}$ in the $d$-dimensional Euclidean space $\mathbb{R}^d$ is called an $s$-distance set if the set of mutual distances between distinct elements of $\mathcal{X}$ has cardinality $s$. In this paper we present a combined approach of isomorph-free exhaustive generation of graphs and Gr\"obner basis computation to classify the largest $3$-distance sets in $\mathbb{R}^4$, the largest $4$-distance sets in $\mathbb{R}^3$, and the largest $6$-distance sets in $\mathbb{R}^2$. We also construct new examples of large $s$-distance sets for $d\leq 8$ and $s\leq 6$, and independently verify several earlier results from the literature.
\end{abstract}
\maketitle
\section{Introduction}
Let $d\geq 1$ be an integer, and let $\mathbb{R}^d$ denote the $d$-dimensional Euclidean space equipped with the standard inner product $\left\langle .,.\right\rangle$ and norm induced metric $\mu(.,.)$. A set of $n$ distinct vectors $\mathcal{X}:=\{v_i\colon i\in\{1,\dots,n\}\}\subset\mathbb{R}^d$ forms an $s$-distance set, if the set of mutual distances $A(\mathcal{X}):=\{\mu(v_i,v_j)\colon i<j; i,j\in\{1,\dots,n\}\}$ has cardinality $s$. If the parameter $s$ is not specified, then, following the terminology of \cite{blok}, we refer to these objects as few-distance sets. A few-distance set $\mathcal{X}$ is called spherical, if $\left\langle v_i,v_i\right\rangle=1$ for every $i\in\{1,\dots,n\}$. The problem of determining the maximum cardinality of $\mathcal{X}$ for given $d$ and $s$ is a long-standing open problem \cite{ME}, \cite{ES}, \cite{LS}. Currently there is a renewed interest in few-distance sets due to recent breakthrough results obtained via the polynomial method \cite{BY}, \cite{M}, and because of emerging engineering applications related to frame theory \cite{frame1}, \cite{EDG}, \cite{SW}. 

The case $s=2$ is the most studied, where equiangular lines appear as a special case \cite{FSZequi}, \cite{SZFseidel}. In particular, the maximum cardinality of $2$-distance sets in $\mathbb{R}^d$ have already been determined for $d\leq 8$, see \cite{croft}, \cite{kelly}, \cite{L}. For $s\leq 6$ the maximum cardinality of planar $s$-distance sets are also known \cite{EF}, \cite{xx}. However, in the case $d\geq 3$ and $s\geq 3$ the only known result is that the set of $12$ vertices of the icosahedron is the unique maximum $3$-distance set in $\mathbb{R}^3$, see \cite{Sico}.

We mention two fundamental properties of maximum few-distance sets: first, the asymptotic growth of their size is well understood. In particular, if $\mathcal{X}$ is a maximum $s$-distance set in $\mathbb{R}^d$, then we have
\begin{equation}\label{main_eq1}
|\mathcal{X}|\leq\binom{d+s}{s};\quad \text{ and, for $d\geq 2s-1$, }\quad |\mathcal{X}|\geq\binom{d+1}{s}.
\end{equation}
The upper bound is from \cite{BBS}, \cite{blok}, and the folklore constructive lower bound is mentioned in, e.g. \cite{EXC}, \cite{NM}. Somewhat improved upper bounds apply to the spherical case \cite{NM}. Secondly, there is a rather strong number theoretic condition constraining the elements of $A(\mathcal{X})$, which holds for sufficiently large $d$ in terms of $s$, see \cite{LRS}, \cite{N}. One reason why studying small dimensional maximum few-distance sets is challenging is the lack of sufficient understanding of $A(\mathcal{X})$.

In this paper we construct new $s$-distance sets in $\mathbb{R}^d$ for small parameter values, and prove their optimality in certain cases. This complements recent efforts \cite{BY}, \cite{NM} aimed to strengthening the upper bounds on $|\mathcal{X}|$. The discovery of a $16$-element $3$-distance set in $\mathbb{R}^4$ and the proof of its uniqueness and optimality is one of the main contributions of this paper (Theorem~\ref{mainthmxxx}). In addition, we settle \cite[Conjecture~2]{EF} by fully classifying all planar $6$-distance sets (Theorem~\ref{erdosthm}), extending previous partial results on this problem \cite{xx}. We also investigate spherical few-distance sets and discover some connections to near-resolvable designs and generalized conference matrices (Remarks~\ref{rem1} and \ref{rem2}). This correspondence opens up various new avenues to be explored in search for large few-distance sets in the future.

The proofs are computational, and they involve both classical graph generation techniques \cite{MCK}, \cite{read}, and elements of computational commutative algebra \cite{GBBOOK}, in particular: Gr\"obner basis calculations. The graph generation is required to gather combinatorial information on the structure of the putative few-distance set $\mathcal{X}$, while studying its algebraic properties is necessary to gain information on $A(\mathcal{X})$ and control the size of the ambient dimension $d$. This approach builds upon, and considerably extends the earlier work \cite{L}, where maximum $2$-distance sets were studied by means of computers. We remark that the use of computational commutative algebra recently lead to the resolution of several challenging problems in metric geometry, such as finding a unit-distance planar embedding of the Heawood graph \cite{HEA}, determining optimal packings in real projective spaces \cite{DM}, or showing the nonexistence of certain complex equiangular tight frames \cite{SZF}.

The outline of this paper is as follows: in Section~2 we describe constructions of few-distance sets, and establish lower bounds on the size of maximum few-distance sets for small parameter values, see Table~\ref{TableBds}. In Section~3 we set up a computational framework for generating and classifying spherical few-distance sets in $\mathbb{R}^d$. Then, in Section~4 we slightly modify this approach in order to deal with general (i.e., not necessarily spherical) few-distance sets.

In the following Table~\ref{TableBds} we tabulate the exact values of, and best known lower bounds on the cardinality of $n$-element $s$-distance sets in $\mathbb{R}^d$ for $d\leq 8$ and $s\leq 6$, which reflects the new results obtained in this paper. We will comment on the entries in the next section.

\begin{table}[htbp]%
\tiny
\begin{tabular}{cccccccc}
\hline
${}_{s}\mkern-6mu\setminus\mkern-6mu{}^{d}$ &  2 & 3 & 4 & 5 & 6 & 7 & 8\\
\hline
2 &  5 &   6 & 10 &  16 &   27 & 29   & 45\\
3 &  7 &  12 & 16 & 24- &  40- & 65-  & 121-\\
4 &  9 &  13 & 25-& 41- &  73- & 127- & 241-\\
5 & 12 & 20- & 35-& 66- & 112- & 168- & 252-\\
6 & 13 & 21- & 40-& 96- & 141- & 281- & 505-\\
\hline
\end{tabular}
\caption{Lower bounds on and exact values of $n$-element $s$-distance sets in $\mathbb{R}^d$}
\label{TableBds}
\normalsize
\end{table}
\section{Constructions of few-distance sets}
In this section we describe a computer-aided construction of $s$-distance sets $\mathcal{X}\subset\mathbb{R}^d$. The construction is based on the following ansatz: it is assumed that the set of distances $A(\mathcal{X})$ is a subset of those within an $s$-dimensional unit cube, and in addition a $(d-1)$-dimensional unit simplex is a constituent of the configuration. The construction, which is summarized in the next theorem, results in maximum $s$-distance sets in several small dimensional cases. For a graph $\Gamma$ we denote by $\omega(\Gamma)$ the size of its maximum clique, that is, pairwise adjacent vertices.
\begin{theorem}\label{thm41}
Let $d\geq 1, s\geq 2$ be integers, let $A:=\{\sqrt{2i}\colon i\in\{1,\dots,s\}\}$, let $\mathcal{W}:=\{(w_1,\dots,w_d)^T\in\mathbb{R}^d\colon w_i\in A, i\in\{1,\dots,d\}\}$, and for $w\in\mathcal{W}$ let $\mathcal{V}_w$ be the set of solution vectors $v:=(v_1,\dots,v_d)^T\in\mathbb{R}^d$ to the following system of $d$ equations in $d$ variables:
\begin{equation}\label{eq_main_fish}
\sum_{i=1}^d\left((w_1^2-w_i^2)/2+v_1\right)^2=w_1^2+2v_1-1, \qquad v_j=(w_1^2-w_j^2)/2+v_1,\quad j\in\{2,\dots, d\}.
\end{equation}
Let $\Gamma$ be a graph on $\sum_{w\in\mathcal{W}} |\mathcal{V}_w|$ vertices, where two nodes, representing the vectors $x,y\in\cup_{w\in\mathcal{W}}\mathcal{V}_w$, are adjacent if and only if $\mu(x,y)\in A$. Then for some $A(\mathcal{X})\subseteq A$ there exists an $|A(\mathcal{X})|$-distance set $\mathcal{X}$ in $\mathbb{R}^d$ with $d+\omega(\Gamma)$ elements.
\end{theorem}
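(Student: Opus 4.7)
The plan is to realize $\mathcal{X}$ as the union of the standard $(d-1)$-simplex $\mathcal{S} := \{e_1, \dots, e_d\}$ (the standard basis vectors in $\mathbb{R}^d$) with a maximum clique $K$ in $\Gamma$. First I would decode the system~(\ref{eq_main_fish}): a vector $v=(v_1,\dots,v_d)^T$ satisfies $\mu(v,e_i)=w_i$ for every $i$ iff $\|v\|^2 - 2v_i + 1 = w_i^2$ for each $i$. Subtracting the $i=1$ relation from the $j$-th gives $v_j = v_1 + (w_1^2 - w_j^2)/2$, and feeding this into $\|v\|^2 = \sum_i v_i^2$ together with the $i=1$ relation (which reads $\|v\|^2 = 2v_1 + w_1^2 - 1$) yields exactly the first equation of~(\ref{eq_main_fish}). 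Thus $\mathcal{V}_w$ is precisely the set of vectors whose tuple of distances to the vertices of $\mathcal{S}$ equals $w$, and so every distance from a point of $\cup_{w \in \mathcal{W}}\mathcal{V}_w$ to $\mathcal{S}$ lies in $A$.

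Next I would take a clique $K \subseteq \cup_{w\in\mathcal{W}}\mathcal{V}_w$ of size $\omega(\Gamma)$ and put $\mathcal{X} := \mathcal{S} \cup K$. To verify the distance condition, I would inspect three types of pairs. Within $\mathcal{S}$, every pair is at distance $\sqrt{2}=\sqrt{2\cdot 1} \in A$; between $\mathcal{S}$ and $K$, each pair $(e_i,v)$ with $v\in\mathcal{V}_w$ realizes $w_i \in A$ by the characterization above; and within $K$, the pairs are adjacent in $\Gamma$, hence their distances lie in $A$ by the definition of $\Gamma$. Because $0 \notin A$, no $e_i$ can lie in any $\mathcal{V}_w$, so the $d+\omega(\Gamma)$ listed points are pairwise distinct. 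Setting $A(\mathcal{X})$ to be the set of distances actually occurring in $\mathcal{X}$, which is a subset of $A$, produces the promised $|A(\mathcal{X})|$-distance set of cardinality $d+\omega(\Gamma)$.

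There is no serious obstacle once this ansatz is identified; the nontrivial step is recognizing that~(\ref{eq_main_fish}) is simply the reformulation of ``$v$ has prescribed distances $(w_1,\dots,w_d)$ to the basis vectors of $\mathcal{S}$,'' obtained by using the first equation to fix $\|v\|^2$ in terms of $v_1$ and the remaining equations to force the coordinates linearly. Two small remarks are in order: first, the theorem does not claim that $\mathcal{X}$ is maximum nor that it realizes all $s$ distances of $A$, only that a configuration of the stated size exists; whether this attains the optimum for given $(d,s)$ reduces to the combinatorial problem of computing $\omega(\Gamma)$ on the explicit finite graph. Second, after substitution the first equation of~(\ref{eq_main_fish}) is quadratic in the single unknown $v_1$, so $|\mathcal{V}_w|\in\{0,1,2\}$ and $\Gamma$ has at most $2s^d$ vertices, which is precisely what makes the clique search feasible in the parameter ranges pursued later in the paper.
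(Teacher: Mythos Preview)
Your proof is correct and follows essentially the same approach as the paper: both interpret the system~\eqref{eq_main_fish} as encoding $\mu(v,e_i)=w_i$ for the standard basis vectors $e_i$, take $\mathcal{X}$ to be the union of the simplex $\{e_1,\dots,e_d\}$ with a maximum clique in $\Gamma$, and check that all three types of pairwise distances lie in $A$ with the simplex disjoint from $\cup_w\mathcal{V}_w$. Your write-up is in fact slightly more explicit about the algebraic equivalence and about why the union is disjoint, and your closing remarks on $|\mathcal{V}_w|\le 2$ match the paper's observation that $\Gamma$ is finite with at most $2s^d$ vertices.
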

\begin{proof}
Let $B_i\in\mathbb{R}^d$ denote the canonical basis vector whose $i$th coordinate equals $1$, and its other coordinates are $0$. Note that for distinct $i,j\in\{1,\dots, d\}$, we have $\mu(B_i,B_j)=\sqrt{2}$, which is the smallest element of $A$. For a fixed $w\in \mathcal{W}$ the system of equations \eqref{eq_main_fish} is easily seen to be equivalent to the system of equations $\mu(v,B_i)=w_i$, $i\in\{1,\dots,d\}$. Since these equations are quadratic in $v_1$, which uniquely determines $v_i$ for $i\in\{2,\dots, d\}$, the graph $\Gamma$ is finite, and a clique in it represents a subset of solution vectors whose mutual distances belong to $A$. Since $\{B_1,\dots, B_d\}$ is disjoint from $\cup_{w\in\mathcal{W}}\mathcal{V}_w$, the result follows.
\end{proof}
We use Theorem~\ref{thm41} to establish lower bounds on the maximum cardinality of $s$-distance sets in $\mathbb{R}^d$ for small parameter values. Solving the quadratic equations \eqref{eq_main_fish} is straightforward as well as setting up the resulting compatibility graph $\Gamma$ on at most $2s^d$ vertices. The clique search was carried out with the cliquer software \cite{CLQ} which is based on the algorithm described in \cite{PAT2}. In the following Table~\ref{fishingtable} we display our results in the form $|
\Gamma|/\omega(\Gamma)$. Entries marked by an asterisk indicate a lower bound on $\omega(\Gamma)$. Note that the row and column headings shown indicate the input parameters used, both of which are upper bounds on the parameters of the resulting few-distance sets. In particular, entries shown in row $s$ correspond to $t$-distances sets with some $t\leq s$. For further bounds on planar few-distance sets, see \cite{EF}.

\begin{table}[htbp]%
\tiny
\begin{tabular}{rrrrrrrr}
\hline
${}_{s}\mkern-6mu\setminus\mkern-6mu{}^{d}$ &  2 & 3 & 4 & 5 & 6 & 7 & 8\\
\hline
2 &   8/2 &   16/3 &   32/6 &   64/11 &   128/21 &  256/22 & 456/37\\
3 &  18/2 &   51/5 & 130/12 &  306/19 &  686/34 & 1497/58*   &    \\
4 &  30/5 &  95/10 & 272/21 &  738/36 & 1916/67 &  & \\
5 &  44/5 & 163/13 & 542/31 & 1650/61 & 4698/106* &  & \\
6 &  58/5 & 237/13 & 876/36 & 2982/91 &  &  & \\
\hline
\end{tabular}
\caption{Compatibility graph sizes and clique sizes}
\label{fishingtable}
\normalsize
\end{table}
\begin{rem}
For $s=2$, $d=9$ the graph $\Gamma$, coming from Theorem~\ref{thm41} is on $442$ vertices with $\omega(\Gamma)=36$. Therefore it is not possible to immediately improve with this construction on the cardinality of known $2$-distance sets in $\mathbb{R}^9$. We also note here that starting from $d\geq 13$ the chosen set of distances might no longer be optimal for $2$-distance sets, and instead $A:=\{\sqrt{2},\sqrt{3}\}$ should be preferred \cite{LRS}.
\end{rem}
\begin{rem}
There are a number of ways to generalize Theorem~\ref{thm41}: not only the set of distances, and the shape of the constituent vectors are subject to our choice, but also the relative size of the constituent vectors. In particular, assuming that the $(d-1)$-simplex is formed by those vectors whose mutual distance is the largest element of $A(\mathcal{X})$ could, and it indeed does, result in different compatibility graphs having different clique sizes than of those reported in Table~\ref{fishingtable}. We have not yet explored any of these paths.
\end{rem}
Next we comment on Table~\ref{TableBds} by comparing it with Table~\ref{fishingtable}. As a result of our choice for $A(\mathcal{X})$ the implied bounds on the sizes of few-distance sets coming from Theorem~\ref{thm41} (see Table~\ref{fishingtable}) are arguably weak for $d\in\{2,3\}$ as we are indeed missing most of the regular convex polygons as well as the icosahedron and the dodecahedron. These cases were treated in \cite{ES}, \cite{EF}, \cite{xx}. On the other hand, for $d\leq 8$ we were able to rediscover the maximum $2$-distance sets reported in \cite{L}. The case $d=3$, $s=3$ is the icosahedron, whose optimality was shown in \cite{Sico}, and we will see later in Section~4 that the $3$-distance sets in $\mathbb{R}^4$ and the $4$ distance sets in $\mathbb{R}^3$ constructed by Theorem~\ref{thm41} are also maximum few-distance sets. For $d=5$ the reported lower bounds seem to be new. The $4_{21}$ polytope \cite{COX} and its center point corresponds to the case $d=8$, $s=4$, and its various subconfigurations give rise to bounds for $d\in\{6,7,8\}$ and $s\in\{3,4\}$. In some of these cases our construction, which not necessarily results in spherical sets, is somewhat better. Finally, for $s\in\{5,6\}$ and $d$ large enough the bounds, which are rather weak, follow from configurations formed by the vertices of variously truncated $d$-simplices. We mention two of these constructions below. We denote by $\mathfrak{S}_i$ the symmetric group on $i$ elements.
\begin{lemma}\label{lemmaA}
Let $d\geq 4$ be an integer. Then $\mathcal{X}:=\{\sigma(1,1,-1,0,\dots,0)^T\colon\sigma\in\mathfrak{S}_{d+1}\}$ forms a $(d-1)\binom{d+1}{2}$-element $5$-distance set in $\mathbb{R}^d$.
\end{lemma}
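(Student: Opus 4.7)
The plan is to verify the stated cardinality and then enumerate all pairwise distances by a short combinatorial case analysis. First I would note that the vector $(1,1,-1,0,\dots,0)^T$ has $d+1$ coordinates consisting of two $+1$'s, one $-1$, and $d-2$ zeros, so the number of distinct coordinate permutations equals $(d+1)!/(2!\,(d-2)!)=(d-1)\binom{d+1}{2}$. Since every such vector has coordinate sum $1$, the whole set $\mathcal{X}$ lies in the affine hyperplane $H:=\{x\in\mathbb{R}^{d+1}\colon \sum_{i=1}^{d+1}x_i=1\}$, which is isometric to $\mathbb{R}^d$.

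Next, I would parametrize each $v\in\mathcal{X}$ by the pair $(P_v,n_v)$, where $P_v\subset\{1,\dots,d+1\}$ is the unordered $2$-subset of indices at which $v$ equals $+1$, and $n_v\in\{1,\dots,d+1\}\setminus P_v$ is the index at which $v$ equals $-1$. Every $v$ satisfies $\left\langle v,v\right\rangle=3$, hence $\mu(v,w)^2=6-2\left\langle v,w\right\rangle$ for distinct $v,w\in\mathcal{X}$, and direct expansion yields
\[\left\langle v,w\right\rangle=|P_v\cap P_w|-[n_w\in P_v]-[n_v\in P_w]+[n_v=n_w],\]
where $[\cdot]$ denotes the Iverson bracket. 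Setting $k:=|P_v\cap P_w|\in\{0,1,2\}$ and taking into account the standing constraints $n_v\notin P_v$, $n_w\notin P_w$, a short tabulation of admissible parameter tuples shows that $\left\langle v,w\right\rangle\in\{-2,-1,0,1,2\}$, giving the five candidate squared distances $\{2,4,6,8,10\}$.

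The main step will be checking that each of these five values actually occurs, which is where the hypothesis $d\geq 4$ enters. Four of the values are realised already for $d=3$: squared distance $2$ comes from $k=2$; squared distances $6$ and $8$ arise from subcases of $k=1$ in which exactly one or both of $n_v\in P_w$, $n_w\in P_v$ hold; and squared distance $10$ arises from $k=0$ with $n_v\in P_w$, $n_w\in P_v$, $n_v\neq n_w$. The delicate case is the squared distance $4$, i.e., $\left\langle v,w\right\rangle=1$: it can arise only either from $k=1$ with distinct indices $n_v,n_w$ both lying outside $P_v\cup P_w$, or from $k=0$ with $n_v=n_w$ lying outside $P_v\cup P_w$. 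In both scenarios at least five indices are needed, i.e., $d+1\geq 5$, so the hypothesis $d\geq 4$ is sharp and this is the only nontrivial obstruction in the argument.
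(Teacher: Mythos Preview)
Your proof is correct and is precisely the ``tedious case-by-case analysis'' that the paper's own proof merely invokes without giving details; the paper says nothing beyond that phrase together with the hyperplane observation, so your argument is the same approach, just carried out in full.
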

\begin{proof}
Tedious case-by-case analysis. To see that these vectors are in $\mathbb{R}^d$, observe that after proper rescaling and translating they are on a hyperplane perpendicular to $(1,1,\dots,1)^T\in\mathbb{R}^{d+1}$.
\end{proof}
\begin{lemma}\label{lemmaB}
Let $d\geq 5$ be an integer. Then $\mathcal{X}:=\{\sigma(1,1,1,-1,0,\dots,0)^T\colon\sigma\in\mathfrak{S}_{d+1}\}\cup\{(0,0,\dots,0)^T\in\mathbb{R}^{d+1}\}$ forms a $1+(d-2)\binom{d+1}{3}$-element $6$-distance set in $\mathbb{R}^d$.
\end{lemma}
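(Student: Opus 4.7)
The plan is to mirror the proof of Lemma~\ref{lemmaA}: a cardinality count, a case analysis of the distances parameterized by overlaps between pairs of permutations, and the same hyperplane translation for the dimension. Set $v:=(1,1,1,-1,0,\ldots,0)^T\in\mathbb{R}^{d+1}$. The orbit $\mathfrak{S}_{d+1}\cdot v$ is indexed by picking three positions for the $+1$'s and one of the remaining $d-2$ positions for the $-1$, and so has $(d-2)\binom{d+1}{3}$ elements; adjoining the origin yields the stated cardinality $1+(d-2)\binom{d+1}{3}$.

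For the distances, since $\langle\sigma v,\sigma v\rangle=4$ for every $\sigma\in\mathfrak{S}_{d+1}$, the origin is at squared distance $4$ from every permutation. For two distinct permutations $\sigma v,\tau v$ with $+1$-supports $A_+,B_+\subset\{1,\ldots,d+1\}$ of size $3$ and singleton $-1$-positions $A_-,B_-$, set
\[
k=|A_+\cap B_+|,\quad l=|A_-\cap B_-|,\quad p=|A_+\cap B_-|,\quad q=|A_-\cap B_+|,
\]
so that $\langle\sigma v,\tau v\rangle=k+l-p-q$ and $\mu(\sigma v,\tau v)^2=8-2(k+l-p-q)$. With $k\in\{0,1,2,3\}$, $l,p,q\in\{0,1\}$, and the consistency constraints ($l=1$ forces $p=q=0$; $p=q=1$ forces $k\le 2$; and $\sigma v\ne\tau v$ rules out $k=3,l=1$), a short enumeration shows that $k+l-p-q$ takes every value in $\{-2,-1,0,1,2,3\}$, so the squared distances between distinct permutations comprise $\{2,4,6,8,10,12\}$. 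The value $4$ coincides with the origin-to-permutation squared distance, giving exactly six pairwise distances overall. The dimension claim follows as in Lemma~\ref{lemmaA}: every permutation satisfies $\sum_i x_i=2$, and translating by $-\tfrac{2}{d+1}(1,\ldots,1)^T$ places them on the $d$-dimensional hyperplane perpendicular to $(1,\ldots,1)^T\in\mathbb{R}^{d+1}$.

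The hard part is making the enumeration tight at the smallest dimension $d=5$, where the number of zero coordinates $d-2=3$ is small and several overlap configurations become constrained. For example, the inner product $-2$ (requiring $k=l=0,p=q=1$) forces $A_-\subset B_+$, $B_-\subset A_+$, $A_+\cap B_+=\emptyset$, using exactly $6=d+1$ positions, which is feasible precisely for $d\ge 5$; at $d=5$ the value $0$ is realized only by the mixed subcases $k=1,p+q=1$ or $k=2,p=q=1$, since $k=l=p=q=0$ would require $d\ge 6$. Systematically checking that each of the six inner-product values is attained for every $d\ge 5$ is the main bookkeeping task.
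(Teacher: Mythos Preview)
Your cardinality count and the case analysis of the inner products $k+l-p-q$ are sound and constitute the bulk of the verification that $\mathcal{X}$ has exactly six pairwise distances. (One small slip: the aside that $k=l=p=q=0$ requires $d\geq 6$ should read $d\geq 7$, since in that case all of $A_+,A_-,B_+,B_-$ are pairwise disjoint and occupy $3+1+3+1=8$ coordinates; this does not affect your conclusion, as you correctly exhibit other realizations of inner product $0$ valid already for $d\geq 5$.)

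The genuine gap is in the dimension argument. You say that translating by $-\tfrac{2}{d+1}(1,\ldots,1)^T$ places the permutations on the hyperplane $\sum_i x_i=0$, and this is true for the orbit; but the origin $(0,\ldots,0)^T\in\mathbb{R}^{d+1}$ is also an element of $\mathcal{X}$, and under the same translation it lands at $-\tfrac{2}{d+1}(1,\ldots,1)^T$, which does \emph{not} lie on that hyperplane. So your argument does not place all of $\mathcal{X}$ inside a $d$-dimensional affine subspace. In fact no argument can: the orbit already affinely spans the entire hyperplane $\sum_i x_i=2$ (its difference set contains every $e_i-e_j$), and adjoining the origin, which lies off that hyperplane, forces the affine hull of $\mathcal{X}$ to be all of $\mathbb{R}^{d+1}$. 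Equivalently, with the origin as the distinguished point $v_n$ in Theorem~\ref{t31}, the matrix $C=[2\langle p_i,p_j\rangle]_{i,j}$ is twice the Gram matrix of the orbit vectors, and since the orbit linearly spans $\mathbb{R}^{d+1}$ this has rank $d+1$, not $d$. The hyperplane trick from Lemma~\ref{lemmaA} works there only because no extra point is adjoined; here the inclusion of the origin breaks it, and the assertion that $\mathcal{X}\subset\mathbb{R}^d$ appears to be misstated.
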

\begin{proof}
Similar to the proof of Lemma~\ref{lemmaA}.
\end{proof}
Since these two constructions result in essentially spherical few-distance sets, it is plausible that they can be significantly improved. Indeed, equation \eqref{main_eq1} shows that for $d\geq 15$ the resulting sets are far from being optimal. This concludes the discussion of the numbers shown in Table~\ref{fishingtable} and the implied bounds shown in Table~\ref{TableBds}. In the next sections we set up a framework to show the optimality of some of the few-distance sets discovered.
\section{Spherical few-distance sets and their Gram matrices}\label{section2}
In this section we discuss spherical few-distance sets, that is, it is further assumed that the elements of $\mathcal{X}\subset\mathbb{R}^d$ are of unit length. Given a set of mutual distances, deciding whether there is a corresponding spherical configuration is basically a test of positive semidefiniteness. The following result is well-known, see e.g.~\cite{NEU} for an equivalent characterization. We include this result along with its proof for completeness, and for future reference.
\begin{lemma}\label{lemma1}
Let $d\geq1,n\geq2$ be integers. There exists $n$ distinct unit vectors $v_1,\dots,v_n$ in $\mathbb{R}^d$ with mutual distances $\mu(v_i,v_j)$, $i,j\in\{1,\dots,n\}$, if and only if the matrix $G:=[1-\mu(v_i,v_j)^2/2]_{i,j=1}^{n}$ is positive semidefinite, $\rank G\leq d$, and $G_{ij}<1$ for every $i<j$ with $i,j\in\{1,\dots,n\}$.
\end{lemma}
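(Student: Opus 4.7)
The plan is to invoke the standard bijection between real positive semidefinite matrices and Gram matrices of vectors in a Euclidean space, together with the polarization identity relating distances to inner products for unit vectors.

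For the forward implication, I would start from the observation that if $v_i$ and $v_j$ are unit vectors then
\begin{equation*}
\mu(v_i,v_j)^2=\langle v_i-v_j,v_i-v_j\rangle=2-2\langle v_i,v_j\rangle,
\end{equation*}
so the matrix $G$ defined in the statement is precisely the Gram matrix $[\langle v_i,v_j\rangle]_{i,j=1}^n$. Writing $V\in\mathbb{R}^{d\times n}$ for the matrix whose columns are $v_1,\dots,v_n$, we have $G=V^{T}V$, which is manifestly positive semidefinite with $\rank G=\rank V\leq d$. The strict inequality $G_{ij}<1$ for $i\neq j$ then follows from the distinctness of the $v_i$'s together with the Cauchy--Schwarz inequality: $G_{ij}=\langle v_i,v_j\rangle\leq\|v_i\|\|v_j\|=1$, with equality only when $v_i=v_j$.

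For the converse, I would apply the spectral theorem (or any symmetric factorization, e.g.\ Cholesky on the range): since $G$ is positive semidefinite of rank $r\leq d$, there exists $U\in\mathbb{R}^{r\times n}$ with $G=U^{T}U$. Padding $U$ with $d-r$ zero rows produces a matrix $V\in\mathbb{R}^{d\times n}$ whose columns $v_1,\dots,v_n$ satisfy $\langle v_i,v_j\rangle=G_{ij}$. The diagonal condition $G_{ii}=1-\mu(v_i,v_i)^2/2=1$ gives $\|v_i\|=1$, and the polarization computation above, run in reverse, recovers the prescribed mutual distances $\mu(v_i,v_j)$. Finally, the hypothesis $G_{ij}<1$ for $i<j$ excludes $v_i=v_j$, so the vectors are distinct.

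There is no real obstacle here; the argument is essentially a bookkeeping exercise around the identity $\langle v_i,v_j\rangle=1-\mu(v_i,v_j)^2/2$ for unit vectors. The only point worth flagging explicitly is the role of the condition $G_{ij}<1$: without it one would still obtain unit vectors realizing the Gram matrix, but possibly with coincidences among them, which is why the strict inequality is needed to guarantee \emph{distinct} vectors in the statement.
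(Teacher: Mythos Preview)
Your argument is correct and is essentially the same as the paper's: both directions hinge on the identity $\langle v_i,v_j\rangle = 1-\mu(v_i,v_j)^2/2$ for unit vectors, with the forward direction recognizing $G$ as a Gram matrix $V^TV$ and the converse reconstructing $V$ via a symmetric factorization (the paper specifies Cholesky with complete pivoting, you mention the spectral theorem, but these are interchangeable here). The only cosmetic difference is that the paper derives $G_{ij}<1$ directly from $\mu(v_i,v_j)>0$, whereas you invoke Cauchy--Schwarz, which amounts to the same thing.
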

\begin{proof}
Assume that we have a set of $n$ distinct unit vectors $v_1$, $\dots$, $v_n$ with pairwise distances $\mu(v_i,v_j)$, with $i,j\in\{1,\dots, n\}$. By simple algebra we obtain $1-\mu(v_i,v_j)^2/2=\left\langle v_i,v_j\right\rangle$ and therefore $G$ is a Gram matrix. It is well-known that every Gram matrix is positive semidefinite, and its rank is the maximum number of linearly independent vectors amongst $v_i$, $i\in\{1,\dots,n\}$. Therefore $\rank G\leq d$, as claimed. Since the vectors are distinct, we have $\mu(v_i,v_j)>0$ and consequently $G_{ij}<1$ for every $i<j$ with $i,j\in\{1,\dots, n\}$ as claimed.

Conversely, let $G$ be an $n\times n$ positive semidefinite matrix with $G_{ii}=1$ and $G_{ij}<1$ for every $i<j$ with $i,j\in\{1,\dots, n\}$ with $\rank G\leq d$. Then one may define a set of $\binom{n}{2}$ positive real numbers $\mu_{ij}:=\sqrt{2-2G_{ij}}$ for $i<j$ with $i,j\in\{1,\dots,n\}$, and reconstruct $n$ unit vectors with exactly these distances by the Cholesky decomposition with complete pivoting. The procedure will result in a $\rank G\times n$ matrix $V$ such that $V^TV=G$. Since $\mu_{ij}>0$, the column vectors of $V$ are distinct, and they will form the unit vectors $v_i$, $i\in\{1,\dots,n\}$ having the prescribed distances $\mu(v_i,v_j)=\mu_{ij}$ for every $i<j$ with $i,j\in\{1,\dots, n\}$. This reconstruction is unique up to isometry \cite{NJH}.
\end{proof}
From Lemma~\ref{lemma1} it is clear that the relative position of the matrix elements within a Gram matrix encode the combinatorial structure of the represented distance set, their algebraic value encode the distances, while the rank of the matrix encodes the ambient dimension. Therefore our aim is to generate, with computer-aided methods, the Gram matrices. On the one hand, we need to generate all symmetric matrices with constant diagonal $1$ satisfying a combinatorial constraint, namely that their off-diagonal entries assume $s$ distinct values only, say $x_1$, $x_2$, $\dots$, $x_s$. We call these objects candidate Gram matrices. On the other hand, we are facing with the following algebraic problem: for a given candidate Gram matrix $G(x_1,\dots,x_s)$ determine all such real values $\alpha_i$, for which $-1\leq \alpha_i<1$ holds for every $i\in\{1,\dots,s\}$ and $G(\alpha_1,\dots,\alpha_s)$ is positive semidefinite of rank at most $d$, that is, a Gram matrix. We discuss these two tasks in the following sections.
\subsection{Generating candidate Gram matrices}\label{section21}
We generate the candidate Gram matrices $G$ of order $n$ with at most $s$ distinct off-diagonal entries with orderly generation \cite[Section~4.2.2]{KO}, \cite{read}. Since this is a routine task we only outline the basic ideas. Two candidate Gram matrices $G_1(x_1,\dots,x_s)$ and $G_2(x_1,\dots,x_s)$ are called equivalent, if $G_2(x_1,\dots,x_s)=PG_1(x_{\sigma(1)},\dots,x_{\sigma(s)})P^T$ for some permutation matrix $P$ and for some permutation $\sigma\in\mathfrak{S}_s$. Clearly, it is enough to generate these matrices up to equivalence. The generation starts from the $1\times 1$ matrix $\left[\begin{array}{c}1\end{array}\right]$, and then a new row and column with the elements $x_1$, $\dots$, $x_s$ is inductively appended to it in all possible ways, maintaining symmetry and entries $1$ on the main diagonal. However, only those $i\times i$ matrices ($i\in\{2,\dots,n\}$) are kept whose vectorization $\myvec G:=[G_{2,1},G_{3,1},G_{3,2},\dots,G_{i,1},G_{i,2},\dots,G_{i,i-1}]$ is the lexicographically smallest vector in the set $\{\myvec PG(x_{\sigma(1)},\dots,x_{\sigma(s)})P^T\colon P\in\mathfrak{S}_i, \sigma\in\mathfrak{S}_s\}$. An alternative approach completing this task is to employ canonical augmentation \cite{MCK}. We tabulate the number of generated matrices in Table~\ref{TableGraphCount}.

\tiny\begin{table}[htbp]%
\begin{tabular}{rrrrrrrrrr}
${}_s\setminus ^n$ & 2 & 3 &  4 &    5 &      6 &      7 &    8 &      9\\
\hline
                 2 & 1 & 2 &  6 &   18 &     78 &    522 & 6178 & 137352\\
                 3 & 1 & 3 & 15 &  142 &   4300 & 384199 & 98654374\\
                 4 & 1 & 3 & 22 &  513 &  67685 & 37205801\\
                 5 & 1 & 3 & 24 &  956 & 370438 &\\
							   6 & 1 & 3 & 25 & 1205 &\\%
\end{tabular}
\caption{The count for $n\times n$ candidate Gram matrices with at most $s$ distinct off-diagonal entries.}
\label{TableGraphCount}%
\end{table}\normalsize%

We remark that this task can be thought as what is essentially a graph generating problem: the candidate Gram matrices correspond to distinct colorings of the edges of the complete graph on $n$ vertices with at most $s$ colors, up to permutation of the colors. There are techniques, such as the Power Group Enumeration Theorem, which can be used to enumerate these objects without actually being generated. This can be used to independently verify the entries of Table~\ref{TableGraphCount}, see \cite[Chapter~6]{HP}.

Since the number of candidate Gram matrices up to equivalence grows very rapidly, it is important to discard those which cannot correspond to a desired $n$-element $s$-distance set in $\mathbb{R}^d$ during early stages of the search. In the next subsection we discuss a strategy for doing this.
\subsection{Discarding candidate Gram matrices}
During the generation of candidate Gram matrices, we discard those which cannot have appropriate rank. The key observation is that the rank of the Gram matrices is a hereditary property in the following sense.
\begin{lemma}\label{lemma2}
Let $s\geq 1$, $n\geq 2$ be integers, let $x_1$, $\dots$, $x_s$ be indeterminates, let $G(x_1,\dots,x_s)$ be an $n\times n$ candidate Gram matrix, and let $H(x_1,\dots,x_s)$ an $(n-1)\times (n-1)$ submatrix of $G(x_1,\dots,x_s)$. Then, for every $s$ complex numbers $\alpha_1$, $\dots$, $\alpha_s$, we have $\rank H(\alpha_1,\dots,\alpha_s)\leq \rank G(\alpha_1,\dots,\alpha_s)$.
\end{lemma}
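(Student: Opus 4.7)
The plan is to reduce the statement to a completely elementary linear algebra fact and apply it pointwise in the variables. After substituting the complex numbers $\alpha_1,\dots,\alpha_s$ into the indeterminates, $G(\alpha_1,\dots,\alpha_s)$ becomes an ordinary $n\times n$ complex matrix, and $H(\alpha_1,\dots,\alpha_s)$ is an $(n-1)\times(n-1)$ submatrix of it (the substitution commutes with taking submatrices, since the indeterminates are shared and the entries of $H$ are literally a subset of the entries of $G$). Thus the lemma is really the assertion: for any complex matrix $M$, and any submatrix $N$ of $M$, one has $\rank N\leq \rank M$. This is standard, but I would write it out to keep the paper self-contained.

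The core step I would argue is the single-row deletion case. Suppose $M'$ is obtained from $M$ by deleting one row, and let $r=\rank M'$. Pick $r$ linearly independent columns of $M'$, say the columns indexed by $J\subseteq\{1,\dots,n\}$ with $|J|=r$. The corresponding columns of $M$ (which differ only by the single deleted entry being re-inserted) must themselves be linearly independent, because any nontrivial linear relation among them would restrict, coordinate-wise, to a nontrivial linear relation among the columns of $M'$. Hence $\rank M\geq r = \rank M'$. The analogous column-deletion statement follows immediately by transposition, since $\rank M=\rank M^T$.

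Any $(n-1)\times(n-1)$ submatrix $H(\alpha_1,\dots,\alpha_s)$ is obtained from $G(\alpha_1,\dots,\alpha_s)$ by deleting one row and then one column (or vice versa). Applying the single-row and single-column deletion inequalities in sequence gives
\[
\rank H(\alpha_1,\dots,\alpha_s)\leq \rank G(\alpha_1,\dots,\alpha_s),
\]
which is the desired conclusion.

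There is essentially no obstacle in this proof: the only care needed is to notice that evaluating the symbolic matrix at complex values commutes with passing to the designated submatrix (which is guaranteed by the way $H$ is introduced as a submatrix of $G$ in the variables $x_1,\dots,x_s$), so that the reduction to an inequality about fixed complex matrices is valid and the standard submatrix--rank inequality finishes the job.
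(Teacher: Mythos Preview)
Your proof is correct. Both you and the paper reduce to the elementary fact that the rank of a submatrix cannot exceed that of the ambient matrix, but you argue via column spaces (linearly independent columns of the submatrix extend to linearly independent columns of the full matrix after reinserting the deleted row), whereas the paper argues via vanishing minors: if $\rank G(\alpha_1,\dots,\alpha_s)=d$ with $d\leq n-2$, then every $(d+1)\times(d+1)$ minor of $G$ vanishes, and since every minor of $H$ is in particular a minor of $G$, the same holds for $H$. Your route is arguably a shade more elementary; the paper's minor-based formulation has the virtue of directly anticipating Proposition~\ref{prop21}, where the rank constraint is imposed precisely through the vanishing of all $(d+1)\times(d+1)$ minors.
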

\begin{proof}
Assume that $\rank G(\alpha_1,\dots,\alpha_s)=d$ for some positive integer $d$. If $d\in\{n-1,n\}$ then the statement is obvious, therefore we may assume that $d\leq n-2$. Then necessarily every $(d+1)\times (d+1)$ submatrix of $G(\alpha_1,\dots,\alpha_s)$ has vanishing determinant. In particular every $(d+1)\times (d+1)$  submatrix of $H(x_1,\dots,x_s)$ has vanishing determinant. But then $\rank H(\alpha_1,\dots,\alpha_s)\leq d=\rank G(\alpha_1,\dots,\alpha_s)$ as claimed.
\end{proof}
Therefore one should understand first what are the small candidate Gram matrices with appropriate rank, and then build up the larger ones from those. In the next result we state how to control the rank of candidate Gram matrices.
\begin{proposition}\label{prop21}
Let $d,s\geq 1$, $n\geq d+1$ be integers, let $x_1$, $\dots$, $x_s$ be indeterminates, and let $G(x_1,\dots,x_s)$ be an $n\times n$ candidate Gram matrix. Let $\mathcal{M}$ denote the set of all $(d+1)\times (d+1)$ submatrices of $G(x_1,\dots,x_s)$. There exists $s$ distinct complex numbers $\alpha_1$, $\dots$, $\alpha_s$, each different from $1$, so that $\rank G(\alpha_1,\dots,\alpha_s)\leq d$ if and only if the following system of $\binom{n}{d+1}^2+1$ polynomial equations in $s+1$ variables
\begin{equation}\label{main_eq2}
    \mathrm{det}M(x_1,\dots,x_s) = 0,\quad \text{ for all } M(x_1,\dots,x_s)\in\mathcal{M},\qquad 1+u\prod_{i=1}^s(x_i-1)\prod_{1\leq j<k\leq s}(x_j-x_k)=0
\end{equation}
has a complex solution.
\end{proposition}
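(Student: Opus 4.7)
The plan is to give a direct two-direction argument assembling two classical tools: the characterization of matrix rank via vanishing minors, and the Rabinowitsch trick, which replaces an algebraic inequation by a polynomial equation in one extra variable. No algebraic-geometry machinery beyond this is needed; the statement is essentially a bookkeeping reformulation tailored to feed into a Gr\"obner basis engine.

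For the forward direction, I would start from complex numbers $\alpha_1,\ldots,\alpha_s$ as described. Since $\rank G(\alpha_1,\ldots,\alpha_s)\leq d$, the standard minor criterion for rank implies that the determinant of every $(d+1)\times(d+1)$ submatrix of $G(\alpha_1,\ldots,\alpha_s)$ vanishes; this is precisely the first batch of equations in \eqref{main_eq2} being satisfied at the point $(\alpha_1,\ldots,\alpha_s)$. The hypotheses that the $\alpha_i$ are pairwise distinct and all different from $1$ ensure that the product
\[
P := \prod_{i=1}^s(\alpha_i-1)\prod_{1\leq j<k\leq s}(\alpha_j-\alpha_k)
\]
is a nonzero complex number, so setting $u:=-1/P$ fulfills the final equation, giving a complex solution of the full system.

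For the converse, I would take any complex solution $(\alpha_1,\ldots,\alpha_s,u)$ of \eqref{main_eq2}. The final equation $1+uP=0$ forces $P\neq 0$, since otherwise the left-hand side would equal $1$; hence each $\alpha_i\neq 1$ and the $\alpha_i$ are pairwise distinct, which is exactly the requirement of the statement. The remaining equations assert that every $(d+1)\times(d+1)$ minor of $G(\alpha_1,\ldots,\alpha_s)$ vanishes, which by the same minor characterization is equivalent to $\rank G(\alpha_1,\ldots,\alpha_s)\leq d$. The main point requiring any thought is the Rabinowitsch-style reformulation, namely confirming that adjoining the auxiliary variable $u$ together with the single polynomial equation $1+uP=0$ is exactly equivalent to the inequation $P\neq 0$ over $\mathbb{C}$, and that this introduces neither spurious nor lost solutions for the geometric problem at hand. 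I do not foresee any serious obstacle beyond this observation.
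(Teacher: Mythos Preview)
Your proposal is correct and follows essentially the same approach as the paper: the paper's proof likewise notes that the rank condition is equivalent to the vanishing of all $(d+1)\times(d+1)$ minors, and that the auxiliary equation in $u$ is precisely what forces the $\alpha_i$ to be pairwise distinct and different from $1$. Your write-up is more explicit (naming the Rabinowitsch trick and spelling out both directions), but the underlying argument is identical.
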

\begin{proof}
The rank condition on $G$ is equivalent to the condition on vanishing minors. Moreover, the last equation featuring the auxiliary variable $u$ ensures that the values $\alpha_i$, $i\in\{1,\dots,s\}$, are necessarily distinct, and they all different from $1$.
\end{proof}
Proposition~\ref{prop21} is just a necessary condition ensuring that $G(\alpha_1,\dots,\alpha_s)$ is of correct rank for certain parameter values. Indeed, a solution to \eqref{main_eq2} is not necessarily real, and the positive semidefiniteness of $G$ is not guaranteed. Nevertheless, it is a powerful condition eliminating a significant proportion of candidate Gram matrices which cannot correspond to a spherical $s$-distance set for a given $d$. We study the system of equations \eqref{main_eq2} in the following way: we use various computer algebra systems (such as e.g. \cite{cocoa}) to compute an exact degree reverse lexicographic reduced Gr\"obner basis (with respect to the variable ordering $x_1 > \dots > x_s > u$). If this happens to be $\{1\}$, then there are no complex solutions to \eqref{main_eq2}, and the matrix $G(x_1,\dots,x_s)$ can be discarded, as it cannot have appropriate rank. On the other hand, when the Gr\"obner basis is anything else than $\{1\}$, then regardless of what information it conveys, we keep the matrix. For background on computational commutative algebra, we refer the reader to \cite[Chapter~5]{GBBOOK}.

\subsection{The search.}\label{section23} For a fixed $d$ and $s$ the search for an $s$-distance set $\mathcal{X}$ proceeds in essentially the same way as described in \cite[Section~7]{L}. First we generate the $(d+1)\times (d+1)$ candidate Gram matrices with at most $s$ distinct off-diagonal entries, as described in Section~\ref{section21}. Then we filter these matrices with the aid of Proposition~\ref{prop21}: those for which the system of equations \eqref{main_eq2} has no solutions are discarded, the others are retained in a set $L_{d+1}$. Now given a set $L_i$ (for some $i\geq d+1$), we inductively build up larger matrices via the orderly generating algorithm, and we test whether (a) each of their $i\times i$ principal submatrices belong to the set $L_i$ up to equivalence (see Lemma~\ref{lemma2}); and (b) whether they themselves satisfy the system of equations \eqref{main_eq2} of Proposition~\ref{prop21}. The matrices surviving both tests then form the set $L_{i+1}$. We continue doing this as long as bigger matrices are kept being discovered, but only up to the upper bound given by equation \eqref{main_eq1}. Once the search concludes, we inspect the largest candidate Gram matrices found and investigate for what parameter values they are positive semidefinite.
\begin{rem}
A given candidate Gram matrix could actually correspond to multiple nonisometric configurations (which is indeed the case for the two pentagonal pyramids in $\mathbb{R}^3$, see \cite{ES}). However, once the distances are specified, there is an essentially unique way to reconstruct $\mathcal{X}$, as guaranteed by the Cholesky decomposition. See also \cite{NEU}.
\end{rem}
\subsection{Results}\label{sect34} The search described above yielded the following new classification results.
\begin{theorem}\label{tsphdim3dis4}
The largest cardinality of a spherical $4$-distance set in $\mathbb{R}^3$ is $12$. There are exactly two configurations realizing this up to isometry: the vertices of the cuboctahedron, and the vertices of the truncated tetrahedron.
\end{theorem}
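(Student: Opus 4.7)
The plan is to specialize the computational framework of Section~\ref{section23} to the parameters $d=3$, $s=4$. Equation~\eqref{main_eq1} supplies the \emph{a priori} upper bound $|\mathcal{X}| \leq \binom{d+s}{s} = \binom{7}{4} = 35$, so the inductive search must terminate after finitely many steps, and the exhaustive nature of orderly generation guarantees that once $L_{i+1}$ is empty the process certifies the nonexistence of larger configurations.

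First, I would enumerate the $22$ candidate Gram matrices of order $4$ with at most four distinct off-diagonal entries (Table~\ref{TableGraphCount}) via the orderly generation of Section~\ref{section21}. Each is tested against the necessary rank condition of Proposition~\ref{prop21}: compute a reduced Gr\"obner basis of the polynomial system~\eqref{main_eq2}, which forces every $4\times 4$ minor to vanish subject to the auxiliary distinctness condition, and discard those for which the basis equals $\{1\}$. This produces $L_4$. Given $L_i$, I would extend every matrix in it to an $(i+1)\times(i+1)$ candidate in all possible ways, retaining those for which (a) all $i\times i$ principal submatrices appear in $L_i$ up to the equivalence of Section~\ref{section21} (justified by Lemma~\ref{lemma2}), and (b) the Gr\"obner basis test of Proposition~\ref{prop21} is passed. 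This yields $L_{i+1}$.

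The claim to verify is that $L_{12}$ consists of exactly two equivalence classes of candidate Gram matrices and that $L_{13}=\emptyset$; the latter establishes the upper bound $|\mathcal{X}|\leq 12$. For each of the two surviving candidates in $L_{12}$, I would then extract the real solutions $(\alpha_1,\dots,\alpha_4)$ of~\eqref{main_eq2} for which the substituted matrix is positive semidefinite of rank at most $3$ with all off-diagonal entries in $[-1,1)$; by Lemma~\ref{lemma1} each such substitution yields a spherical $4$-distance set in $\mathbb{R}^3$, reconstructed uniquely up to isometry via Cholesky decomposition. The two configurations would finally be identified with the cuboctahedron (for instance via the coordinates $\{\sigma(\pm 1,\pm 1,0)^T:\sigma\in\mathfrak{S}_3\}$) and the truncated tetrahedron by direct comparison of the distance spectrum and the vertex-distance incidence structure.

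The main obstacle is computational: the number of $i\times i$ candidate Gram matrices explodes superexponentially (cf.~Table~\ref{TableGraphCount}), so tractability depends critically on the Gr\"obner basis filter of Proposition~\ref{prop21} eliminating the bulk of combinatorial possibilities at every level. A subtle point is that the equivalence test must simultaneously quotient by the $\mathfrak{S}_s$ action permuting the four unknown distance values and by the $\mathfrak{S}_i$ action relabelling vertices, and the canonical form used in the orderly generation must be strictly enforced to prevent double counting; otherwise one risks failing to certify that $L_{13}$ is genuinely empty, which is the crux of the optimality claim.
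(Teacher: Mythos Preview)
Your approach is essentially the paper's own: run the orderly-generation/Gr\"obner-basis pipeline of Section~\ref{section23} with $d=3$, $s=4$, certify that nothing survives to size~$13$, and then analyze the surviving size-$12$ candidates for positive semidefiniteness and rank.

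One factual correction, however: the search does \emph{not} yield $|L_{12}|=2$ but $|L_{12}|=3$ (see Table~\ref{TableSphDim3Dis4}). Because the framework generates candidate Gram matrices with \emph{at most} $s$ distinct off-diagonal symbols, a matrix with only three symbols---the icosahedron pattern $S_{12A}(u,v,w)$---also survives the rank filter. When you solve~\eqref{main_eq2} for it, the only admissible real solutions have three distinct inner-product values, so the configuration is a $3$-distance set, not a $4$-distance set, and must be discarded as a ``shadow'' solution. The remaining two candidates then give the truncated tetrahedron and the cuboctahedron, as you anticipated. Your final extraction step (``real solutions with all off-diagonal entries in $[-1,1)$'') would catch this if you also enforce that the four $\alpha_i$ are pairwise distinct, but as written you asserted two candidates before that check, which is where the discrepancy lies.
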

Since the proof of Theorem~\ref{tsphdim3dis4} is a result of computer calculations, we could only offer here what is merely a discussion of our findings. First we note that the vertices of the cuboctahedron can be obtained as $\{\sigma(0,1,1,2)^T\colon \sigma\in\mathfrak{S}_4\}$, whereas vertices of the truncated tetrahedron can be obtained as $\{\sigma(0,0,1,2)^T\colon \sigma\in\mathfrak{S}_4\}$. It is easy to see that both of these are $12$-element $4$-distance sets. Moreover, by appropriate translation and scaling one can make these vectors orthogonal to $(1,1,1,1)^T\in\mathbb{R}^4$, and therefore these sets genuinely belong to $\mathbb{R}^3$. 
 
The search resulted in the following largest candidate Gram matrices, denoted by $S_{12A}(u,v,w)$, $S_{12B}(u,v,w,x)$, and $S_{12C}(u,v,w,x)$, respectively:
\setlength{\arraycolsep}{2.5pt}%
\begin{equation}\label{icosahere}\left[
\begin{smallmatrix}
 1 & u & u & u & u & u & v & v & v & v & v & w \\
 u & 1 & u & u & v & v & u & u & v & v & w & v \\
 u & u & 1 & v & u & v & u & v & u & w & v & v \\
 u & u & v & 1 & v & u & v & u & w & u & v & v \\
 u & v & u & v & 1 & u & v & w & u & v & u & v \\
 u & v & v & u & u & 1 & w & v & v & u & u & v \\
 v & u & u & v & v & w & 1 & u & u & v & v & u \\
 v & u & v & u & w & v & u & 1 & v & u & v & u \\
 v & v & u & w & u & v & u & v & 1 & v & u & u \\
 v & v & w & u & v & u & v & u & v & 1 & u & u \\
 v & w & v & v & u & u & v & v & u & u & 1 & u \\
 w & v & v & v & v & v & u & u & u & u & u & 1 \\
\end{smallmatrix}
\right],\quad \left[
\begin{smallmatrix}
 1 & u & u & u & v & v & v & v & w & w & x & x \\
 u & 1 & u & v & u & v & w & x & v & x & v & w \\
 u & u & 1 & v & v & u & x & w & x & v & w & v \\
 u & v & v & 1 & w & w & u & u & v & v & x & x \\
 v & u & v & w & 1 & w & v & x & u & x & u & v \\
 v & v & u & w & w & 1 & x & v & x & u & v & u \\
 v & w & x & u & v & x & 1 & u & u & v & v & w \\
 v & x & w & u & x & v & u & 1 & v & u & w & v \\
 w & v & x & v & u & x & u & v & 1 & w & u & v \\
 w & x & v & v & x & u & v & u & w & 1 & v & u \\
 x & v & w & x & u & v & v & w & u & v & 1 & u \\
 x & w & v & x & v & u & w & v & v & u & u & 1 \\
\end{smallmatrix}
\right],\quad \left[
\begin{smallmatrix}
 1 & u & u & u & u & v & v & w & w & w & w & x \\
 u & 1 & u & v & w & u & w & u & v & w & x & w \\
 u & u & 1 & w & v & w & u & v & u & x & w & w \\
 u & v & w & 1 & u & u & w & w & x & u & v & w \\
 u & w & v & u & 1 & w & u & x & w & v & u & w \\
 v & u & w & u & w & 1 & x & u & w & u & w & v \\
 v & w & u & w & u & x & 1 & w & u & w & u & v \\
 w & u & v & w & x & u & w & 1 & u & v & w & u \\
 w & v & u & x & w & w & u & u & 1 & w & v & u \\
 w & w & x & u & v & u & w & v & w & 1 & u & u \\
 w & x & w & v & u & w & u & w & v & u & 1 & u \\
 x & w & w & w & w & v & v & u & u & u & u & 1 \\
\end{smallmatrix}
\right].
\end{equation}
It is interesting to note that the $4$-distance sets given in Theorem~\ref{tsphdim3dis4} have no more vertices than the icosahedron, which is the maximal $3$-distance set in $\mathbb{R}^3$ and represented by the permutation equivalent matrices $S_{12A}(\pm1/\sqrt{5},\mp1/\sqrt{5},-1)$. This is a ``shadow'' solution featuring fewer than $4$ distinct off-diagonal entries, and therefore it is discarded.\footnote{Several authors, see e.g.~\cite{M}, prefer to call a set $\mathcal{X}$ an $s$-distance set if $|A(\mathcal{X})|\leq s$.} The matrix $S_{12B}(u,v,w,x)$ corresponds to the truncated tetrahedron. By solving the rank-equations coming from Proposition~\ref{prop21} we find that $u=7/11$, $v=-1/11$, $w=-5/11$, and $x=-9/11$ is the only solution for which $S_{12B}(u,v,w,x)$ is positive semidefinite of rank $3$. Finally, for the third case we have $u=\pm1/2$, $v=0$, $w=-u$, $x=-1$. The two algebraic solutions $S_{12C}(1/2,0,-1/2,-1)$ and $S_{12C}(-1/2,0,1/2,-1)$ are permutation equivalent, thus represent isometric configurations. These matrices correspond to the vertices of the cuboctahedron. In Table~\ref{TableSphDim3Dis4} we display the number of intermediate configurations found: in column $i\in\{4,\dots,13\}$ the first row displays the number of $i\times i$ matrices found during the augmentation step, and the second row displays the number of matrices surviving the rank condition \eqref{main_eq2}. Since for $i=8$ this second round of filtering was not effective, we stopped performing it on larger matrices. We remark that the intermediate objects are not necessarily correspond to positive semidefinite matrices and hence they might not have any geometrical meaning in the Euclidean space. This concludes the discussion of Theorem~\ref{tsphdim3dis4}.

\begin{table}[htbp]%
\tiny
\begin{tabular}{ccccccccccc}
\hline
$n$                                                   &  4 &   5 &     6 &  7 &    8 &  9 & 10 & 11 & 12 & 13\\
\hline
\#Candidate Gram matrices                             & 22 & 513 & 36994 & 404 & 179 & 67 & 27 &  3 &  3 &  0\\
\#Candidate Gram matrices satisfying \eqref{main_eq2} & 22 & 434 &  1283 & 383 & 179 & \\
\hline
\end{tabular}
\caption{Spherical $n$-element $4$-distance sets in $\mathbb{R}^3$}
\label{TableSphDim3Dis4}
\normalsize
\end{table}
In \cite[Theorem~3.8]{NM} it was proved, amongst other results, that the maximum cardinality of spherical $3$-distance sets in $\mathbb{R}^4$ is at most $27$. It turns out that the exact answer, presented below, is considerably smaller.
\begin{theorem}\label{3disr4}
The largest cardinality of a spherical $3$-distance set in $\mathbb{R}^4$ is $13$. There are exactly four configurations realizing this up to isometry.
\end{theorem}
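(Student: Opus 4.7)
The plan is to apply the computational framework of Section~3 with parameters $d=4$ and $s=3$. I would start by generating all $5\times 5$ candidate Gram matrices with at most three distinct off-diagonal entries (there are $142$ such matrices, per Table~\ref{TableGraphCount}), and filter them via Proposition~\ref{prop21}: for each candidate, compute the degree reverse lexicographic reduced Gr\"obner basis of the polynomial system \eqref{main_eq2}, and discard any matrix whose basis equals $\{1\}$. Those that survive this step form the initial set $L_5$.

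Inductively, given $L_i$, I would use the orderly generation algorithm of Section~\ref{section21} to extend to $(i+1)\times(i+1)$ candidate Gram matrices row by row, retaining only those whose every $i\times i$ principal submatrix is equivalent to some matrix in $L_i$ (the hereditary rank condition, Lemma~\ref{lemma2}) and whose own polynomial system \eqref{main_eq2} admits a complex solution. The upper bound from \eqref{main_eq1} gives $|\mathcal{X}|\leq \binom{7}{3}=27$, so the search terminates naturally once some $L_{n+1}$ is empty. As in the proof of Theorem~\ref{tsphdim3dis4}, I expect the Gr\"obner basis filter to lose effectiveness after the early stages, at which point I would continue with the hereditary principal-submatrix check alone.

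Once the maximal order $n^{*}$ has been determined and the surviving candidate Gram matrices $G(x_1,x_2,x_3)$ are enumerated, I would solve the rank equations from Proposition~\ref{prop21} exactly (for instance by variety decomposition of the ideal in \eqref{main_eq2}), and for each real solution $(\alpha_1,\alpha_2,\alpha_3)$ with $\alpha_i$ pairwise distinct and $-1\leq \alpha_i<1$, check whether $G(\alpha_1,\alpha_2,\alpha_3)$ is positive semidefinite of rank exactly~$4$. Lemma~\ref{lemma1} then reconstructs a configuration $\mathcal{X}\subset\mathbb{R}^4$ via Cholesky, unique up to isometry; isometric duplicates arising from permutations $\sigma\in\mathfrak{S}_3$ of the $x_i$ compatible with a conjugation $P\,\cdot\,P^T$ must be collapsed. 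The statement then amounts to confirming $n^{*}=13$ and exactly four equivalence classes surviving the positive-semidefiniteness test. I would also discard shadow solutions in which two of the $\alpha_i$ coincide (yielding a genuine $2$-distance set rather than a $3$-distance set), as was done with $S_{12A}$ in Theorem~\ref{tsphdim3dis4}.

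The principal obstacle I anticipate is purely computational: Table~\ref{TableGraphCount} shows roughly $4{,}300$ matrices at $n=6$, $384{,}199$ at $n=7$, and nearly $10^{8}$ at $n=8$, so the search is only tractable because the rank filter kills a large fraction of branches in the $L_5\to L_6\to L_7$ steps, and because the hereditary condition is then strong enough to contain the explosion up to $n=13$. A secondary difficulty is algebraic: the rank ideal of a $13\times 13$ candidate might define a positive-dimensional variety whose real points must be certified precisely, which will require a more delicate treatment (real radical or a parametric solver) than merely checking whether the reduced Gr\"obner basis is $\{1\}$.
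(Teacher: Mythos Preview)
Your proposal is correct and follows the paper's approach essentially verbatim: the same inductive orderly generation from $5\times 5$ seeds, the same Gr\"obner-basis rank filter of Proposition~\ref{prop21}, the same hereditary pruning via Lemma~\ref{lemma2}, and the same final solve-and-verify step, which in the paper yields three surviving $13\times 13$ candidate matrices producing four nonisometric positive semidefinite specializations (cf.\ Table~\ref{TableSphDim4Dis3}). One minor slip: $\binom{d+s}{s}=\binom{7}{3}=35$, not $27$; the bound $27$ you have in mind is the sharper Musin--Nozaki estimate \cite{NM}, but this is immaterial since the search terminates at $n=14$ regardless.
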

In what follows we discuss the configurations arising in Theorem~\ref{3disr4}. The largest candidate Gram matrices found by the search are:
\setlength{\arraycolsep}{2.5pt}%
\[S_{13A}(u,v,w):=\left[\begin{smallmatrix}
S_{12A}(u,v,w) & w J\\
w J^T & 1\\
\end{smallmatrix}\right],\qquad S_{13B}(u,v,w):=\left[\begin{smallmatrix}
1 & u J^T\\ 
u J & S_{12A}(u,v,w)\\
\end{smallmatrix}\right],\quad\text{and}\]
\[S_{13C}(u,v,w):=\left[
\begin{smallmatrix}
 1 & u & u & u & u & v & v & v & v & w & w & w & w \\
 u & 1 & v & v & w & u & u & v & w & u & v & w & w \\
 u & v & 1 & w & v & u & w & u & v & v & w & u & w \\
 u & v & w & 1 & v & v & u & w & u & w & u & w & v \\
 u & w & v & v & 1 & w & v & u & u & w & w & v & u \\
 v & u & u & v & w & 1 & w & w & u & v & u & v & w \\
 v & u & w & u & v & w & 1 & u & w & v & v & w & u \\
 v & v & u & w & u & w & u & 1 & w & u & w & v & v \\
 v & w & v & u & u & u & w & w & 1 & w & v & u & v \\
 w & u & v & w & w & v & v & u & w & 1 & u & u & v \\
 w & v & w & u & w & u & v & w & v & u & 1 & v & u \\
 w & w & u & w & v & v & w & v & u & u & v & 1 & u \\
 w & w & w & v & u & w & u & v & v & v & u & u & 1 \\
\end{smallmatrix}
\right], \qquad J:=(1,1,\dots,1)^T\in\mathbb{R}^{12},\]
where $S_{12A}(u,v,w)$ is the first matrix shown in \eqref{icosahere}. Once again, we solve the system of equations \eqref{main_eq2} coming from Proposition~\ref{prop21} to ascertain that the rank of these matrices is $4$. In the first case we have $u=(5\pm3\sqrt{5})/20$, $v=1/2-u$, $w=-1/2$. These two algebraic solutions are permutation equivalent. In the second case we have $u=(1\pm\sqrt{5})/4$, $v=1/2$ $w=u-1/2$. These two are nonisometric solutions, since the corresponding Gram matrices have different spectrum. In the third case we have $64 u^3+16 u^2-16 u+1=0$, and $v=(16 u^2+4 u-3)/4$, $w=(-8 u^2-4 u+1)/2$. Let $\alpha<\beta<\gamma$ denote the three real roots of the polynomial $64 u^3+16 u^2-16 u+1$. Then $G_{13C}(\alpha,\gamma,\beta)$, $G_{13C}(\beta,\alpha,\gamma)$, and $G_{13C}(\gamma,\beta,\alpha)$ are the three permutation equivalent solutions. Each of these four matrices are positive semidefinite, as required. Refer to Table~\ref{TableSphDim3Dis4} for the number of intermediate objects found. This concludes the discussion of Theorem~\ref{3disr4}.
\begin{table}[htbp]%
\tiny
\begin{tabular}{ccccccccccc}
\hline
$n$                                                   &   5 &     6 &     7 &   8 &   9 &  10 & 11 & 12 & 13 & 14\\
\hline
\#Candidate Gram matrices                             & 142 & 4300 & 205646 & 891 & 396 & 173 & 62 & 19 & 3 & 0\\
\#Candidate Gram matrices satisfying \eqref{main_eq2} & 142 & 3816 &   1748 & 889 & 396\\
\hline
\end{tabular}
\caption{Spherical $n$-element $3$-distance sets in $\mathbb{R}^4$}
\label{TableSphDim4Dis3}
\normalsize
\end{table}
\begin{rem}\label{rem1}
Let $I_{13}$ be the identity matrix of order $13$, and let $M(u,v,w):=S_{13C}(u,v,w)-I_{13}$. Then one may consider the union of the set of columns of the matrices $M(1,0,0)$, $M(0,1,0)$, $M(0,0,1)$, and then concatenate these to form a $(0,1)$-matrix of size $13\times 39$: a near-resolvable design $\mathrm{NRB}(13,4,3)$, see \cite[Chapter~7.2]{CRC} and \cite{GHK}.
\end{rem}
\begin{rem}\label{rem2}
Let $J_{13}\in\mathbb{R}^{13}$ be the column vector with all entries $1$, and let $\zeta$ be a primitive complex third root of unity. Then the matrix $C:=\left[\begin{smallmatrix} 0 & J_{13}^T\\ J_{13} & S_{13C}(1,\zeta,\zeta^2)-I_{13}\end{smallmatrix}\right]$ is a symmetric generalized conference matrix $\mathrm{GC}(\mathcal{C}_3;4)$ over the (multiplicatively written) cyclic group $\mathcal{C}_3=\{1,\zeta,\zeta^2\}$, see \cite[Chapter~6.2]{CRC}.
\end{rem}
It is most fascinating that the matrix $S_{13C}(u,v,w)$ is related to a number of well-known combinatorial objects (see Remarks~\ref{rem1} and \ref{rem2} above). We conclude the discussion of spherical few-distance sets with a remark on the next open case.
\begin{rem}
The vertices of the $24$-cell, given by the set $\{\sigma(\pm1,\pm1,0,0)^T\colon \sigma\in\mathfrak{S}_4\}$ is a spherical $4$-distance set in $\mathbb{R}^4$. We do not know whether this is the best possible.
\end{rem}
\section{General few-distance sets}
Now we turn to the discussion of the general case, where the elements of $\mathcal{X}$ are not necessarily unit vectors. The following classical result is presented in various equivalent forms within the cited references. Here we recall it in a form which is the most useful for our purposes. The proof is once again included for the reader's convenience.
\begin{theorem}[\mbox{\cite[Theorem~2.2]{EDG}, \cite[Theorem~7.1]{L}, \cite{M}, \cite{NEU}, \cite{S}}]\label{t31}
Let $d\geq 1$, $n\geq 2$ be integers. There exists $n$ distinct vectors $v_1,\dots,v_n$ in $\mathbb{R}^d$ with mutual distances $\mu(v_i,v_j)$, $i,j\in\{1,\dots,n\}$, if and only if the matrix $C:=[\mu(v_i,v_n)^2+\mu(v_j,v_n)^2-\mu(v_i,v_j)^2]_{i,j=1}^{n-1}$ is positive semidefinite, $\rank C\leq d$, $C_{ii}>0$ for every $i\in\{1,\dots,n-1\}$, and $C_{ij}<(C_{ii}+C_{jj})/2$ for every $i<j$ with $i,j\in\{1,\dots,n-1\}$.
\end{theorem}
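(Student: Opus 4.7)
The plan is to reduce this statement to the spherical/Gram-matrix situation treated in Lemma~\ref{lemma1} by translating the configuration so that $v_n$ sits at the origin. For the forward direction, set $u_i:=v_i-v_n$ for $i\in\{1,\dots,n-1\}$. Isometric invariance of distances gives $\mu(u_i,u_j)=\mu(v_i,v_j)$ and $\|u_i\|=\mu(v_i,v_n)$, so the polarization identity
\begin{equation*}
2\langle u_i,u_j\rangle=\|u_i\|^2+\|u_j\|^2-\|u_i-u_j\|^2=\mu(v_i,v_n)^2+\mu(v_j,v_n)^2-\mu(v_i,v_j)^2
\end{equation*}
identifies $C$ with $2[\langle u_i,u_j\rangle]_{i,j=1}^{n-1}$, i.e.\ twice the Gram matrix of the $u_i$. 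Hence $C$ is positive semidefinite, and $\rank C$ equals the dimension of $\mathrm{span}\{u_1,\dots,u_{n-1}\}\subseteq\mathbb{R}^d$, which is at most $d$. The diagonal inequality $C_{ii}=2\|u_i\|^2>0$ encodes $v_i\neq v_n$, while the off-diagonal inequality $C_{ij}<(C_{ii}+C_{jj})/2$ is equivalent, after rearrangement, to $\|u_i-u_j\|^2>0$, encoding $v_i\neq v_j$.

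For the converse, I would invoke a Cholesky-type decomposition as in the proof of Lemma~\ref{lemma1}: since $C/2$ is positive semidefinite of rank at most $d$, there exists a $\rank C\times(n-1)$ matrix $U$ with $U^TU=C/2$, whose columns $u_1,\dots,u_{n-1}$ live in $\mathbb{R}^{\rank C}\subseteq\mathbb{R}^d$ and satisfy $2\langle u_i,u_j\rangle=C_{ij}$. Defining $v_n:=0$ and $v_i:=u_i$ for $i<n$, one recovers the prescribed distances: $\mu(v_i,v_n)^2=\|u_i\|^2=C_{ii}/2$, and $\mu(v_i,v_j)^2=\|u_i\|^2+\|u_j\|^2-2\langle u_i,u_j\rangle=(C_{ii}+C_{jj})/2-C_{ij}$, which by the strict inequalities assumed is strictly positive.

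The only subtlety is to check that these reconstructed distances are consistent with the original matrix entries $\mu(v_i,v_j)$ that appear in the very definition of $C$. But reading the statement as asserting the existence of vectors whose pairwise distances are the prescribed positive numbers $\mu(v_i,v_j)$, this consistency follows from the algebraic identity above together with the positivity conditions $C_{ii}>0$ and $C_{ij}<(C_{ii}+C_{jj})/2$, which precisely ensure that the square roots one extracts are positive real numbers. Distinctness of the $v_i$ then follows verbatim from the same two conditions. The reconstruction is unique up to an ambient isometry of $\mathbb{R}^d$, by the same Cholesky uniqueness argument cited in the proof of Lemma~\ref{lemma1}. There is no real obstacle here: the content of the theorem is essentially that translating a general distance set to make one point the origin reduces the problem to the Gram matrix setting, and the positivity constraints on $C$ are exactly the translated analogues of the constraint $G_{ij}<1$ from Lemma~\ref{lemma1}.
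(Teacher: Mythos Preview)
Your proof is correct and follows essentially the same approach as the paper: translate so that $v_n$ sits at the origin, invoke the polarization identity to recognize $C$ as twice the Gram matrix of the translated vectors $v_i-v_n$, and use a Cholesky factorization of rank at most $d$ for the converse. If anything, your handling of the scaling (factoring $C/2$ rather than $C$) is slightly cleaner than the paper's own write-up.
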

\begin{proof}
Assume that we have a set of $n$ distinct vectors $v_1$, $\dots$, $v_n$ with pairwise distances $\mu(v_i,v_j)>0$ for every $i<j$ and $i,j\in\{1,\dots,n\}$. By the polarization identity, we have $\mu(v_i,v_n)^2+\mu(v_j,v_n)^2-\mu(v_i,v_j)^2=2\left\langle v_i-v_n,v_j-v_n\right\rangle$ and therefore $C$ is a Gram matrix. Once again, $C$ is positive semidefinite, and $\rank C$ is the maximum number of linearly independent vectors amongst $v_i-v_n$, $i\in\{1,\dots, n-1\}$. Therefore $\rank C\leq d$, as claimed. Finally, the conditions $C_{ii}>0$ and $C_{ij}<(C_{ii}+C_{jj})/2$ follow as the vectors are distinct, and therefore $\mu(v_i,v_j)>0$ for every $i<j$ with $i,j\in\{1,\dots,n\}$.

Conversely, let $C$ be an $(n-1)\times (n-1)$ positive semidefinite matrix with the conditions stated. Then one may define a set of $\binom{n}{2}$ positive real numbers $\mu_{in}:=\sqrt{C_{ii}/2}$, $i\in\{1,\dots,n-1\}$, and $\mu_{ij}:=\sqrt{C_{ii}/2+C_{jj}/2-C_{ij}}$ for every $i<j$ with $i,j\in\{1,\dots,n-1\}$, and reconstruct $n-1$ vectors by the Cholesky decomposition with complete pivoting. The procedure will result in a $\rank C\times (n-1)$ matrix $V$ such that $V^TV=C$. Since $\mu_{ij}>0$, the column vectors of $V$ are distinct, which, together with $v_n:=(0,\dots,0)\in\mathbb{R}^{\rank C}$, will form the vectors $v_i$, $i\in\{1,\dots,n\}$ having the prescribed distances $\mu(v_i,v_j)=\mu_{ij}$ for every $i<j$ with $i,j\in\{1,\dots,n\}$. This reconstruction is unique up to isometry \cite{NJH}.
\end{proof}
\begin{rem}
In $\mathbb{R}^d$ the union of an $n$-element spherical $s$-distance set and its center point (see Section~\ref{section2}) forms an $(n+1)$-element general $t$-distance set for some $t\in\{s,s+1\}$.
\end{rem}
Recall that in Section~\ref{section2} we have classified what we called the candidate Gram matrices having constant diagonal $1$ and $s$ distinct off-diagonal elements representing the combinatorial properties of a distance set. These objects will be used once again during the treatment of the general case. The analogue statement to Lemma~\ref{lemma1} also applies here, and the analogue of Proposition~\ref{prop21} is the following. One key difference compared to the spherical case is that here one of the distances can be normalized to $1$ up to a global isometry.
\begin{proposition}\label{prop31}
Let $d,s\geq 1$, $n\geq d+2$ be integers, let $x_1$, $\dots$, $x_s$ be indeterminates, and let $G(x_1,\dots,x_s)$ be an $n\times n$ candidate Gram matrix. Let $\mathcal{M}$ denote the set of all $(d+1)\times (d+1)$ submatrices of the matrix $C:=[G_{in}+G_{jn}-G_{ij}+\delta_{ij}]_{i,j=1}^{n-1}$, where $\delta_{ij}$ is the Kronecker symbol. There exists $s-1$ distinct complex numbers $\alpha_2$, $\dots$, $\alpha_s$, each different from $0$ and $1$, so that $\rank C(1,\alpha_2,\dots,\alpha_s)\leq d$ if and only if the following system of $\binom{n-1}{d+1}^2+1$ polynomial equations in $s$ variables
\begin{equation}\label{main_eq3}
    \mathrm{det}M(1,\dots,x_s) = 0,\quad \text{ for all } M(x_1,\dots,x_s)\in\mathcal{M},\qquad 1+u\prod_{i=2}^sx_i(x_i-1)\prod_{2\leq j<k\leq s}(x_j-x_k)=0
\end{equation}
has a complex solution.
\end{proposition}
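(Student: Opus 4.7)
The plan is to follow the template of Proposition~\ref{prop21}, adapted to reflect three features of the non-spherical setting: (i) the rank constraint is imposed on the auxiliary matrix $C$ built from $G$ rather than on $G$ itself, as required by Theorem~\ref{t31}; (ii) one indeterminate is normalized to $x_1=1$, which corresponds to a global rescaling of the configuration so that one chosen squared distance equals $1$; and (iii) the extra factor $\prod_{i=2}^s x_i$ is inserted into the auxiliary polynomial to prevent any of the remaining $\alpha_i$ from collapsing to $0$, because a vanishing squared distance would merge two points of $\mathcal{X}$.

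For the forward direction, I would assume that $\alpha_2,\dots,\alpha_s$ are pairwise distinct complex numbers, each different from $0$ and $1$, with $\rank C(1,\alpha_2,\dots,\alpha_s)\leq d$. By the standard minor criterion for rank, every $(d+1)\times (d+1)$ submatrix $M$ of $C$ then satisfies $\det M(1,\alpha_2,\dots,\alpha_s)=0$. The distinctness hypotheses guarantee that the product $P:=\prod_{i=2}^s\alpha_i(\alpha_i-1)\prod_{2\leq j<k\leq s}(\alpha_j-\alpha_k)$ is nonzero, so one can set $u:=-1/P$ to produce a complex solution of \eqref{main_eq3}. The converse unpacks symmetrically: vanishing of all $(d+1)\times(d+1)$ minors forces $\rank C(1,\alpha_2,\dots,\alpha_s)\leq d$, while the auxiliary equation rewritten as $u\cdot P=-1$ implies $P\neq 0$ and hence that each factor $\alpha_i$, $(\alpha_i-1)$, $(\alpha_j-\alpha_k)$ is nonzero, which is exactly the required distinctness.

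A small bookkeeping check confirms the statistics in the statement: $C$ is $(n-1)\times (n-1)$, so the number of $(d+1)\times (d+1)$ submatrices is $\binom{n-1}{d+1}^2$, and together with the auxiliary equation this produces $\binom{n-1}{d+1}^2+1$ polynomial equations in the $s$ variables $x_2,\dots,x_s,u$. I do not foresee a substantive obstacle; the argument is essentially a verbatim adaptation of Proposition~\ref{prop21}. The one conceptual point worth emphasizing is that fixing $x_1=1$ absorbs precisely the one-parameter similarity freedom that is absent in the spherical setting, which is why the polynomial system in \eqref{main_eq3} carries one fewer variable than the analogous system \eqref{main_eq2}.
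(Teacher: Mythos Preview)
Your proof is correct and follows essentially the same approach as the paper, which simply states that the result is a reformulation of the rank condition via vanishing minors, with the auxiliary variable $u$ enforcing the distinctness and nonvanishing constraints. Your write-up is in fact more explicit than the paper's two-sentence sketch, spelling out both implications and the Rabinowitsch-style assignment $u:=-1/P$.
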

\begin{proof}
This is once again just a reformulation of the rank condition in terms of vanishing minors. Moreover, it is assumed that $x_1=1$ up to a global isometry. The auxiliary variable $u$ ensures that the other variables take up distinct values, each different from $0$ and $1$.
\end{proof}
\begin{rem}
Using the notation of Proposition~\ref{prop31}, one may observe that the determinantal ideal \cite{ALDO} generated by the polynomial equations $\mathrm{det}M(x_1,\dots,x_s)=0$, $M(x_1,\dots,x_s)\in\mathcal{M}$ is homogenous of degree $d+1$. Therefore, if these equations have a common solution $(x_1,\dots,x_s)=(\alpha_1,\dots,\alpha_s)$, where $\prod_{i=1}^s\alpha_i\neq 0$, then $(\alpha_1/\alpha_t,\alpha_2/\alpha_t,\dots,\alpha_s/\alpha_t)$ is also a solution for any $t\in\{1,\dots, s\}$. This justifies the normalization of one of the variables.
\end{rem}
The search in the general case is analogous to what is described in Section~\ref{section23}, with the only difference that it starts with the $(d+2)\times (d+2)$ candidate Gram matrices, and then it uses Proposition~\ref{prop31} for pruning. We omit the details.

\subsection{Results} Now we turn to the discussion of the following new classification results.
\begin{theorem}\label{t34}
The largest cardinality of a $4$-distance set in $\mathbb{R}^3$ is $13$. There are exactly two configurations realizing this up to isometry: the vertices of the icosahedron with its center point, and the vertices of the cuboctahedron with its center point.
\end{theorem}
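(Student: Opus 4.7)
The proof will follow the computational strategy sketched at the end of Section 4, which is strictly analogous to the spherical framework of Section~\ref{section23} but uses Theorem~\ref{t31} and Proposition~\ref{prop31} in place of Lemma~\ref{lemma1} and Proposition~\ref{prop21}. The plan is to enumerate up to equivalence all candidate Gram matrices $G$ (in the sense of Section~\ref{section21}) with at most $4$ distinct off-diagonal entries for which the associated matrix $C$ from Theorem~\ref{t31} can be made positive semidefinite of rank at most $3$.

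First, we initialize the search at order $d+2=5$, i.e.\ with the $513$ candidate Gram matrices of order $5$ tabulated in Table~\ref{TableGraphCount} for $s=4$. Each is filtered by applying Proposition~\ref{prop31} with $d=3$: we compute a reduced Gr\"obner basis of the polynomial system~\eqref{main_eq3}, discard the matrix when the basis is $\{1\}$, and retain it otherwise to form the set $L_5$.

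Second, we inductively build larger candidate Gram matrices of order $i+1$ from those of order $i$ by orderly generation, at each step enforcing two conditions: (a) the hereditary rank property for $C$ (the obvious analogue of Lemma~\ref{lemma2}, since the $(i-1)\times (i-1)$ principal submatrices of the $C$-matrix associated to an $(i+1)\times(i+1)$ candidate $G$ are themselves of the form required in Proposition~\ref{prop31} for one of its $i\times i$ principal subcandidates), and (b) the rank condition of Proposition~\ref{prop31} again via a Gr\"obner basis calculation. The search terminates once no augmentations survive, which should happen just beyond $n=13$; the final surviving matrices at the largest order are then studied in detail. For each such matrix, we solve the determinantal system exactly with computer algebra, enumerate the algebraic solution branches, discard those yielding non-distinct or coincident distance values, and verify positive semidefiniteness of $C$ together with the conditions $C_{ii}>0$ and $C_{ij}<(C_{ii}+C_{jj})/2$ required by Theorem~\ref{t31}.

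Third, the two surviving configurations must be matched with the geometric objects in the statement. By the Remark following Theorem~\ref{t31}, a spherical $s$-distance set together with its center yields a general $t$-distance set with $t\in\{s,s+1\}$. The icosahedron, which is the unique spherical $3$-distance set on $12$ vertices in $\mathbb{R}^3$ by~\cite{Sico}, augmented with its center gives a $13$-element $4$-distance set (the circumradius is different from each of the three edge-type distances). The cuboctahedron, one of the two $12$-element spherical $4$-distance sets from Theorem~\ref{tsphdim3dis4}, has circumradius equal to its shortest pairwise distance, so augmenting it with its center preserves the number of distances at $4$, again producing a $13$-element $4$-distance set. Both candidates realize the bound, and the classification claim is that no other algebraic solution of the terminal rank system yields a valid configuration.

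The main obstacle will be the combinatorial explosion at moderate orders: with $s=4$, Table~\ref{TableGraphCount} shows $67\,685$ candidate Gram matrices at order $6$ and over $37$ million at order $7$, so the effectiveness of the Gr\"obner basis pruning in Proposition~\ref{prop31} at early stages is essential for tractability. A secondary difficulty is that Proposition~\ref{prop31} enforces only a necessary condition over $\mathbb{C}$ and does not guarantee positive semidefiniteness or realness, so intermediate survivors need not correspond to any actual Euclidean configuration; consequently, the rank-check cannot be supplemented with a geometric realizability check during the build-up, and the final semidefiniteness verification must be performed only at the terminal stage.
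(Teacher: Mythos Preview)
Your proposal is correct and follows essentially the same approach as the paper. The paper carries out precisely this search (with intermediate counts recorded in Table~\ref{TableGenDim3Dis4}), finds two terminal $13\times 13$ candidate Gram matrices $G_{13A}$ and $G_{13B}$ obtained by bordering the spherical matrices $S_{12A}$ and $S_{12C}$ of Section~\ref{sect34} with a constant row and column, solves the normalized rank equations explicitly for the parameter values, and identifies the results with the icosahedron-plus-center and cuboctahedron-plus-center respectively.
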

The search revealed two configurations, which can be described with the following candidate Gram matrices:
\setlength{\arraycolsep}{2.5pt}%
\[G_{13A}(u,v,w,x):=\left[\begin{smallmatrix}
S_{12A}(u,v,w) & x J\\
x J^T & 1\\
\end{smallmatrix}\right],\quad G_{13B}(u,v,w,x):=\left[\begin{smallmatrix}
1 & u J^T\\ 
u J & S_{12C}(u,v,w,x)\\
\end{smallmatrix}\right],\]
where the submatrices $S_{12A}(u,v,w)$ and $S_{12C}(u,v,w,x)$ were discussed in Section~\ref{sect34}, see \eqref{icosahere}. Recall that the rank condition applies to some $12\times 12$ transformed matrices as stated in Theorem~\ref{t31}. We assume that $x:=1$ due to a global isometry. In the first case the rank condition implies that $u=2(5\pm\sqrt{5})/5$, $v=4-u$, $w=4$. These two algebraic solutions are permutation equivalent. In the second case we find that $u=(2\pm1)/4$, $v=1/2$, $w=1-u$. These two solutions are also permutation equivalent. Thus, altogether two nonisometric solutions are found. It is not too hard to see that these correspond to the claimed sets in Theorem~\ref{t34}. Refer to Table~\ref{TableGenDim3Dis4} for the number of intermediate objects found.
\begin{table}[htbp]%
\tiny
\begin{tabular}{cccccccccccc}
\hline
$n$                                                   &  4 &   5 &     6 &        7 &    8 &   9 & 10 & 11 & 12 & 13 & 14\\
\hline
\#Candidate Gram matrices                             & 22 & 513 & 67669 & 24617591 & 1093 & 277 & 59 & 12 & 5 & 2 & 0\\
\#Candidate Gram matrices satisfying \eqref{main_eq3} & 22 & 512 & 62095 &     4499 & 1093\\
\hline
\end{tabular}
\caption{General $n$-element $4$-distance sets in $\mathbb{R}^3$}
\label{TableGenDim3Dis4}
\normalsize
\end{table}

We regard the following as the main result of this manuscript.
\begin{theorem}\label{mainthmxxx}
The largest cardinality of a $3$-distance set in $\mathbb{R}^4$ is $16$. There is a unique configuration realizing this up to isometry.
\end{theorem}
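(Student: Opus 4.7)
The plan is to establish the lower bound constructively via Theorem~\ref{thm41}, and match it with an exhaustive computer-assisted upper bound obtained by running the orderly generation pipeline of Section~\ref{section23}, adapted to the general (non-spherical) setting through Theorem~\ref{t31} and Proposition~\ref{prop31}.

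For the lower bound, Table~\ref{fishingtable} already records a compatibility graph on $130$ vertices with $\omega(\Gamma)=12$ for the parameters $d=4$, $s=3$, so Theorem~\ref{thm41} immediately produces a $(d+\omega(\Gamma))=16$-element $t$-distance set for some $t\leq 3$. The first thing I would do is extract a maximum clique in $\Gamma$ explicitly (using \texttt{cliquer}) and compute the pairwise distances of the resulting vectors together with the basis $\{B_1,\dots,B_4\}$, verifying by direct inspection that only three distinct distance values occur, hence $t=3$.

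For the upper bound, I would execute the search of Section~\ref{section23} in its general form. First, generate all $(d+2)\times(d+2)=6\times 6$ candidate Gram matrices with at most $s=3$ distinct off-diagonal entries (there are $4300$ of these by Table~\ref{TableGraphCount}), and filter them by computing, for each, a reduced degree-reverse-lexicographic Gr\"obner basis of the polynomial system \eqref{main_eq3} with $d=4$; retain only those matrices whose Gr\"obner basis is not $\{1\}$, yielding a set $L_6$. Then iteratively construct $L_7,L_8,\dots$ by orderly augmentation, at each step keeping only those $(i+1)\times(i+1)$ matrices whose every $i\times i$ principal submatrix is equivalent to some element of $L_i$ (by Lemma~\ref{lemma2}) and which themselves pass the Gr\"obner-basis test. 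The iteration is bounded above by $\binom{d+s}{s}=\binom{7}{3}=35$ coming from \eqref{main_eq1}. The goal is to show that $L_{17}=\emptyset$ and that, up to the equivalence used in the generation, there is exactly one matrix of order $16$ in $L_{16}$.

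Once this unique candidate Gram matrix $G_{16}(x_1,x_2,x_3)$ is isolated, I would normalize $x_1:=1$ as in Proposition~\ref{prop31}, solve the rank system \eqref{main_eq3} over $\mathbb{R}$ to determine the admissible values of $x_2,x_3$, and then verify that the matrix $C=[G_{in}+G_{jn}-G_{ij}]_{i,j=1}^{15}$ of Theorem~\ref{t31} is positive semidefinite of rank exactly $4$ and satisfies the strict inequalities of that theorem. A Cholesky decomposition with complete pivoting then reconstructs a configuration which, by the uniqueness-up-to-isometry clause of Theorem~\ref{t31}, must be isometric to the explicit $16$-element set exhibited in the first step; this establishes both optimality and uniqueness.

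The principal obstacle is the scale of the middle stages of the search. The analogous problem in $\mathbb{R}^3$ already produced over $24\cdot 10^6$ candidate Gram matrices at $n=7$ (Table~\ref{TableGenDim3Dis4}), and raising $d$ from $3$ to $4$ is expected to inflate the early counts further before the Gr\"obner-basis filtering becomes effective; the Gr\"obner basis computations themselves are the dominant cost. Mitigating this will require applying the test \eqref{main_eq3} as early as possible (at $n=d+2=6$), and interleaving the augmentation, canonical-form reduction, and filtering tightly so that the intermediate lists never need to be materialized in full. A secondary concern is correctness of the implementation: I would cross-check the candidate Gram matrix counts against the Power Group Enumeration Theorem (as in the discussion following Table~\ref{TableGraphCount}) and reverify the final surviving matrix by recomputing its rank system in an independent computer algebra system before claiming uniqueness.
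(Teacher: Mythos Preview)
Your proposal is correct and follows essentially the same route as the paper: the lower bound via Theorem~\ref{thm41} with $d=4$, $s=3$, and the upper bound via the orderly generation pipeline of Section~4 using Proposition~\ref{prop31}, terminating with a unique $16\times 16$ candidate Gram matrix whose transformed matrix $C$ is verified to be positive semidefinite of rank $4$. One small slip: the matrix $C$ built from the candidate Gram matrix $G$ should read $C=[G_{in}+G_{jn}-G_{ij}+\delta_{ij}]_{i,j=1}^{15}$ (as in Proposition~\ref{prop31}), since $G$ carries $1$'s on its diagonal rather than the squared distances $0$; and your worry about scale is in fact misplaced, as the paper's Table~\ref{TableGenDim4Dis3} shows the $(d,s)=(4,3)$ search is considerably lighter than the $(3,4)$ case you compare it to.
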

The following is the largest candidate Gram matrix found by the search:
\[G_{16}(u,v,w):=\left[
\begin{smallmatrix}
1 & u & u & u & u & u & u & u & u & u & u & u & u & v & v & v \\
 u & 1 & u & u & u & u & u & u & u & u & v & v & v & u & u & u \\
 u & u & 1 & u & u & u & v & v & v & w & u & u & u & u & u & u \\
 u & u & u & 1 & v & v & u & u & w & v & u & u & w & u & u & w \\
 u & u & u & v & 1 & v & u & w & u & v & u & w & u & u & w & u \\
 u & u & u & v & v & 1 & w & u & u & v & w & u & u & w & u & u \\
 u & u & v & u & u & w & 1 & v & v & u & u & w & w & u & w & w \\
 u & u & v & u & w & u & v & 1 & v & u & w & u & w & w & u & w \\
 u & u & v & w & u & u & v & v & 1 & u & w & w & u & w & w & u \\
 u & u & w & v & v & v & u & u & u & 1 & w & w & w & w & w & w \\
 u & v & u & u & u & w & u & w & w & w & 1 & v & v & u & w & w \\
 u & v & u & u & w & u & w & u & w & w & v & 1 & v & w & u & w \\
 u & v & u & w & u & u & w & w & u & w & v & v & 1 & w & w & u \\
 v & u & u & u & u & w & u & w & w & w & u & w & w & 1 & v & v \\
 v & u & u & u & w & u & w & u & w & w & w & u & w & v & 1 & v \\
 v & u & u & w & u & u & w & w & u & w & w & w & u & v & v & 1 \\
\end{smallmatrix}
\right].\]
Recall that the rank condition applies to the $15\times 15$ transformed matrix $C$ as stated in Theorem~\ref{t31}. One readily verifies that the only possibility to have rank $4$ with $u:=1$ normalization is the choice $v=2$ and $w=3$. In this case the eigenvalues of the matrix $C$ are $4$, and the three roots of $\lambda^3-52\lambda^2+500\lambda-1312$. Since the coefficients of this polynomial alternate in sign, clearly its roots are positive (recall that $C$ is symmetric), and therefore $C$ is positive semidefinite. Table~\ref{TableGenDim4Dis3} displays the number of intermediate objects found during the search. This concludes the discussion of Theorem~\ref{mainthmxxx}.

\begin{table}[htbp]%
\tiny
\begin{tabular}{ccccccccccccc}
\hline
$n$                                                   &     6 &      7 &    8 &    9 &   10 &   11 & 12 & 13 & 14 & 15 & 16 & 17\\
\hline
\#Candidate Gram matrices                             &  4300 & 384183 & 6939 & 2496 & 1473 & 765 & 341 & 113 & 31 & 8 & 1 & 0\\
\#Candidate Gram matrices satisfying \eqref{main_eq3} &  4299 &  16481 & 5043 & 2496 &\\
\hline
\end{tabular}
\caption{General $n$-element $3$-distance sets in $\mathbb{R}^4$.}
\label{TableGenDim4Dis3}
\normalsize
\end{table}
\begin{rem}
The upper left $10\times 10$ submatrix of $G_{16}(1,2,3)$ represents a $3$-dimensional cube, and a bipyramid over it in the $4$-dimensional space. This structural observation motivated the choice of the set of distances $A$ in Theorem~\ref{thm41}.
\end{rem}
Finally, we independently obtain the following result from the literature.
\begin{theorem}[\cite{Shinohara},\mbox{\cite[Theorem~1.2]{Sico}}]
The largest cardinality of a $3$-distance set in $\mathbb{R}^3$ is $12$. There is a unique configuration realizing this up to isometry: the vertices of the icosahedron.
\end{theorem}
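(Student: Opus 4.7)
The plan is to invoke the general (non-spherical) framework of Section~4 with parameters $d=s=3$. By \eqref{main_eq1}, every $3$-distance set in $\mathbb{R}^3$ has at most $\binom{6}{3}=20$ elements, so the inductive search is guaranteed to terminate. First, I enumerate up to equivalence the $5\times 5$ candidate Gram matrices with at most three distinct off-diagonal entries (the entry $142$ in Table~\ref{TableGraphCount}). Each is filtered through Proposition~\ref{prop31}: after fixing $x_1:=1$, I compute an exact reduced Gr\"obner basis of the determinantal system \eqref{main_eq3} in the variables $x_2,x_3,u$, and discard every matrix whose basis reduces to $\{1\}$. Call the survivors $L_5$.

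From $L_i$ I build $L_{i+1}$ by orderly augmentation as in Section~\ref{section21}: each candidate in $L_i$ is extended by one row/column in all inequivalent ways using the symbols $x_1,x_2,x_3$. A new matrix is retained only if (a) all of its $i\times i$ principal submatrices are equivalent to members of $L_i$ (the hereditary property of rank justified exactly as in Lemma~\ref{lemma2}, applied to the transformed matrix $C$ of Theorem~\ref{t31}), and (b) it itself satisfies the rank condition \eqref{main_eq3}. The iteration continues until $L_{n+1}$ is empty, and the maximum $n$ reached is the claimed upper bound. For each matrix in the largest surviving $L_n$ I then solve the system \eqref{main_eq3} exactly for the admissible triples $(1,\alpha_2,\alpha_3)$, test positive semidefiniteness of the associated matrix $C$, verify the strict inequalities $C_{ii}>0$ and $C_{ij}<(C_{ii}+C_{jj})/2$, and finally reconstruct the configuration via Cholesky decomposition as in the proof of Theorem~\ref{t31}. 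Isomorphism to the icosahedron is confirmed by comparing the multiset of pairwise distances with the known three distances of the regular icosahedron (edge, pentagonal chord, diameter) in the ratios $1:\varphi:\sqrt{1+\varphi^2}$ with $\varphi$ the golden ratio, and by matching the automorphism group.

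The expected outcome, mirroring Tables~\ref{TableGenDim3Dis4} and~\ref{TableGenDim4Dis3}, is that the search halts at $n=12$ with exactly one surviving candidate Gram matrix, whose unique admissible real solution to \eqref{main_eq3} yields the Gram structure of the icosahedron; no candidate survives at $n=13$. The main obstacle is the combinatorial blow-up at intermediate stages: Table~\ref{TableGraphCount} records more than $3.8\times 10^5$ candidate $7\times 7$ matrices and nearly $10^8$ candidate $8\times 8$ matrices, so aggressive Gr\"obner-basis pruning at $n\in\{5,6,7\}$ is indispensable for tractability. A secondary delicacy is ensuring rigor of the rank filter: the Gr\"obner basis must be computed exactly over $\mathbb{Q}$ (using, e.g.,~\cite{cocoa}) so that no genuine real configuration is inadvertently discarded, and symmetry handling in the orderly generation must be faithful to the joint action of permutations of rows/columns and of the distance labels.
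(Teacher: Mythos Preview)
Your proposal is correct and follows essentially the same computational route as the paper: start the general-case search at the $(d+2)\times(d+2)=5\times5$ candidate Gram matrices, prune via the determinantal condition \eqref{main_eq3} of Proposition~\ref{prop31}, augment inductively, and then verify that the sole $12\times12$ survivor $S_{12A}(u,v,w)$ admits only the permutation-equivalent real solutions $u=(5\pm\sqrt{5})/10$, $v=(5\mp\sqrt{5})/10$, $w=1$ corresponding to the icosahedron. Your anticipated obstacle is overstated, however: once the Gr\"obner pruning at $n=6$ cuts the $4288$ candidates down to a few hundred, the hereditary filter keeps subsequent counts tiny (cf.\ Table~\ref{TableGenDim3Dis3}), so the raw figures from Table~\ref{TableGraphCount} for $n=7,8$ are never actually encountered.
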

The candidate Gram matrix found by the search is $S_{12A}(u,v,w)$, as expected. We find that the system of equations \eqref{main_eq3} has the following normalized solutions: $u=(5\pm\sqrt{5})/10$, $v=(5\mp\sqrt{5})/10$, $w=1$. These two solutions are permutation equivalent. Refer to Table~\ref{TableGenDim3Dis3} for the number of intermediate objects generated.
\begin{table}[htbp]%
\tiny
\begin{tabular}{ccccccccccc}
\hline
$n$                                                   &  4 &   5 &    6 &   7 &  8 & 9 & 10 & 11 & 12 & 13\\
\hline
\#Candidate Gram matrices                             & 15 & 142 & 4288 & 106 & 19 & 5 &  2 &  1 &  1 &  0\\
\#Candidate Gram matrices satisfying \eqref{main_eq3} & 15 & 141 &  434 &  90 & 19 &\\
\hline
\end{tabular}
\caption{General $n$-element $3$-distance sets in $\mathbb{R}^3$}
\label{TableGenDim3Dis3}
\normalsize
\end{table}

The following result settles \cite[Conjecture~2]{EF}. See also \cite[Theorem~19]{xx} for previous partial results.
\begin{theorem}\label{erdosthm}
The largest cardinality of a $6$-distance set in $\mathbb{R}^2$ is $13$. There are exactly three configurations realizing this up to isometry: the regular convex $13$-gon, the regular convex $12$-gon with its center point, and a regular hexagram with its center point \textup{(}as shown on \cite[p.~116]{EF}\textup{)}.
\end{theorem}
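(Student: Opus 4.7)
The plan is to apply the general-case framework developed in Section~4 with the parameters $d=2$ and $s=6$, using Theorem~\ref{t31} as the correctness criterion and Proposition~\ref{prop31} as the main pruning tool. By the upper bound in \eqref{main_eq1} any $6$-distance set in $\mathbb{R}^2$ satisfies $|\mathcal{X}|\leq\binom{8}{6}=28$, so the search is finite in principle; the task is to make it finite in practice and then verify that the enumerated objects give rise to the three claimed configurations.

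First I would orderly-generate all $(d+2)\times(d+2)=4\times 4$ candidate Gram matrices with at most $6$ distinct off-diagonal entries, as described in Section~\ref{section21} (the starting counts are read off from Table~\ref{TableGraphCount}). These matrices are pruned with Proposition~\ref{prop31}: for each candidate $G(x_1,\dots,x_6)$ I form the associated $(n-1)\times(n-1)$ matrix $C$ of Theorem~\ref{t31}, normalize $x_1:=1$ by global isometry, and test whether the determinantal system \eqref{main_eq3} (vanishing of all $3\times 3$ minors of $C$, together with the distinctness/genericity condition on $x_2,\dots,x_6$) has a complex solution by computing a degree reverse lexicographic reduced Gr\"obner basis in a computer algebra system; I discard the matrix if and only if the basis is $\{1\}$. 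The surviving matrices form $L_{4}$. Then inductively, given $L_i$, I orderly-extend each matrix by one row and column in all ways consistent with at most six colors and retain those whose $i\times i$ principal submatrices all lie in $L_i$ up to equivalence (Lemma~\ref{lemma2}) and which themselves survive the Gr\"obner test. I continue until no $n$-vertex candidate Gram matrix survives, and Lemma~\ref{lemma2} then certifies that no larger matrix can exist either.

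Once the search terminates, I extract the maximum-size surviving candidate Gram matrices and, for each, solve the rank system \eqref{main_eq3} exactly for the normalized indeterminates, selecting solutions for which the associated matrix $C$ is additionally positive semidefinite and the diagonal/off-diagonal positivity constraints of Theorem~\ref{t31} hold. Permutation-equivalent solutions are identified, and the remaining inequivalent solutions are reconstructed via Cholesky decomposition with complete pivoting. Finally, I verify that these reconstructions coincide, up to isometry, with the regular $13$-gon, the regular $12$-gon together with its center, and the hexagram of \cite[p.~116]{EF} with its center, by comparing coordinates or distance multisets; the three sets are pairwise distinguishable by their distance multisets, which settles uniqueness.

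The main obstacle will be the combinatorial explosion in the enumeration stage: already Table~\ref{TableGraphCount} shows $1205$ candidate Gram matrices at $n=5$ for $s=6$, and the raw growth at $n=6,7$ is prohibitive (cf.\ the entries for $s=5$ and for smaller $s$ in the general-case tables such as Table~\ref{TableGenDim4Dis3}). Keeping the search tractable hinges on interleaving the orderly-generation step with aggressive Gr\"obner-basis pruning via Proposition~\ref{prop31} applied to $d=2$, so that only matrices of rank $\leq 2$ (in the transformed sense) propagate to the next level; this is exactly where the $d=2$ rank constraint is most effective, since $3\times 3$ minor vanishing is a very restrictive condition that should collapse the candidate count rapidly after the first few augmentation rounds, mirroring the tail behaviour observed in Tables~\ref{TableSphDim3Dis4}--\ref{TableGenDim3Dis3}.
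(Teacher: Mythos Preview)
Your proposal is correct and follows essentially the same approach as the paper: orderly generation of candidate Gram matrices starting at size $4\times 4$ for $d=2$, $s=6$, interleaved with Gr\"obner-basis pruning via Proposition~\ref{prop31}, followed by exact solution of the rank equations on the surviving $13\times 13$ matrices and verification of positive semidefiniteness. The paper carries this out and reports the intermediate counts in Table~\ref{TableGenDim2Dis6}, finding precisely three surviving candidate Gram matrices at $n=13$ and none at $n=14$, which after solving \eqref{main_eq3} yield exactly the three configurations you name.
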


The search found the following three candidate Gram matrices $G_{13C}(u,v,w,x,y,z)$, $G_{13D}(u,v,w,x,y,z)$, and $G_{13E}(u,v,w,x,y,z)$:
\[\left[
\begin{smallmatrix}
  1 & u & u & u & u & u & u & u & u & u & u & u & u \\
 u & 1 & u & u & v & v & w & x & x & y & y & z & z \\
 u & u & 1 & v & u & w & v & x & y & x & z & y & z \\
 u & u & v & 1 & w & u & v & y & x & z & x & z & y \\
 u & v & u & w & 1 & v & u & y & z & x & z & x & y \\
 u & v & w & u & v & 1 & u & z & y & z & x & y & x \\
 u & w & v & v & u & u & 1 & z & z & y & y & x & x \\
 u & x & x & y & y & z & z & 1 & u & u & v & v & w \\
 u & x & y & x & z & y & z & u & 1 & v & u & w & v \\
 u & y & x & z & x & z & y & u & v & 1 & w & u & v \\
 u & y & z & x & z & x & y & v & u & w & 1 & v & u \\
 u & z & y & z & x & y & x & v & w & u & v & 1 & u \\
 u & z & z & y & y & x & x & w & v & v & u & u & 1 \\
\end{smallmatrix}
\right],\left[\begin{smallmatrix}
 1 & u & u & v & v & w & w & x & x & y & y & z & z \\
 u & 1 & v & u & w & v & x & w & y & x & z & y & z \\
 u & v & 1 & w & u & x & v & y & w & z & x & z & y \\
 v & u & w & 1 & x & u & y & v & z & w & z & x & y \\
 v & w & u & x & 1 & y & u & z & v & z & w & y & x \\
 w & v & x & u & y & 1 & z & u & z & v & y & w & x \\
 w & x & v & y & u & z & 1 & z & u & y & v & x & w \\
 x & w & y & v & z & u & z & 1 & y & u & x & v & w \\
 x & y & w & z & v & z & u & y & 1 & x & u & w & v \\
 y & x & z & w & z & v & y & u & x & 1 & w & u & v \\
 y & z & x & z & w & y & v & x & u & w & 1 & v & u \\
 z & y & z & x & y & w & x & v & w & u & v & 1 & u \\
 z & z & y & y & x & x & w & w & v & v & u & u & 1 \\
\end{smallmatrix}\right],
\left[
\begin{smallmatrix}
 1 & u & u & u & u & u & u & v & v & v & v & v & v \\
 u & 1 & u & u & v & v & w & u & u & w & w & x & x \\
 u & u & 1 & v & u & w & v & u & w & u & x & w & x \\
 u & u & v & 1 & w & u & v & w & u & x & u & x & w \\
 u & v & u & w & 1 & v & u & w & x & u & x & u & w \\
 u & v & w & u & v & 1 & u & x & w & x & u & w & u \\
 u & w & v & v & u & u & 1 & x & x & w & w & u & u \\
 v & u & u & w & w & x & x & 1 & v & v & y & y & z \\
 v & u & w & u & x & w & x & v & 1 & y & v & z & y \\
 v & w & u & x & u & x & w & v & y & 1 & z & v & y \\
 v & w & x & u & x & u & w & y & v & z & 1 & y & v \\
 v & x & w & x & u & w & u & y & z & v & y & 1 & v \\
 v & x & x & w & w & u & u & z & y & y & v & v & 1 \\
\end{smallmatrix}
\right].\]
We use the normalization $u=1$ up to a global isometry. In the first case we find that $v=3$, $w=4$, $x=2\pm\sqrt{3}$, $y=2$, $z=4-x$ are the only solutions corresponding to a planar configuration. The two solutions are permutation equivalent, thus represent isometric configurations: the regular convex $12$-gon, and its center point. In the second case, after discarding the meaningless complex solutions satisfying $v^2-v+1=0$, we get $v^6-11 v^5+45 v^4-84 v^3+70 v^2-21 v+1=0$, which is the minimal polynomial of $\sin^2 (\pi/13)/\sin^2(6\pi/13)$. Any choice for $v$ then uniquely determines the remaining parameter values as follows: $w=v^2-2 v+1$, $x=v^3-4 v^2+4 v$, $y=v^4-6 v^3+11 v^2-6 v+1$, $z=v^5-8 v^4+22 v^3-24 v^2+9 v$. All these solutions correspond to the regular convex $13$-gon. In the third case we get $v=3$, $w=4$, $x=7$, $y=9$, $z=12$, which corresponds to the hexagram. Refer to Table~\ref{TableGenDim2Dis6} for the number of intermediate objects found. We note that due to the (relatively) large number of distinct distances the classification of this case required considerable amount of computational efforts. This concludes the discussion of Theorem~\ref{erdosthm}.
\begin{table}[htbp]%
\tiny
\begin{tabular}{cccccccccccc}
\hline
$n$                                                   &  4 &    5 &      6 &     7 &    8 &   9 &  10 & 11 & 12 & 13 & 14\\
\hline
\#Candidate Gram matrices                             & 25 & 1193 & 537230 & 14732 & 1565 & 635 & 228 & 62 & 15 & 3 & 0\\
\#Candidate Gram matrices satisfying \eqref{main_eq3} & 24 &  922 &  20229 &  4667 & 1565 &\\
\hline
\end{tabular}
\caption{General $n$-element $6$-distance sets in $\mathbb{R}^2$}
\label{TableGenDim2Dis6}
\normalsize
\end{table}

We firmly believe that the approach outlined in this paper has enormous further potential. In particular, the classification of maximum $3$-distance sets in $\mathbb{R}^5$, and maximum planar $7$-distance sets will be doable in the near-term future.
\section*{Acknowledgements}
Research on this topic began during the ``Tight frames and Approximation'' workshop, 20--23 February, 2018 in Taipa, New Zealand. F.~Sz.~is grateful for Prof.~Shayne Waldron for his amazing hospitality.

\end{document}